\let\origsection=\section \def\section{\@ifstar{\origsection*}{\mysection}} 
\def\mysection{\@startsection{section}{1}\z@{.7\linespacing\@plus\linespacing}{.5\linespacing}{\normalfont\scshape\centering\S}}
\colorlet{darkishRed}{red!80!black}
\colorlet{darkishBlue}{blue!60!black}
\colorlet{darkishGreen}{green!60!black}
\crefname{mainresult}{Theorem}{Theorems}
\renewcommand{\PrintDOI}[1]{\doi{#1}}
\let\setminus=\smallsetminus
\renewcommand{\leq}{\leqslant}
\renewcommand{\geq}{\geqslant}
\newtheorem{theorem}{Theorem}[section]
\newtheorem{lemma}[theorem]{Lemma}
\newtheorem{mainresult}{Theorem} 
\newenvironment{customex}[1]
  {\innercustomex}
  {\endinnercustomex}
\newcounter{claimcounter}[theorem]
\newtheorem*{claim*}{Claim}
\newcounter{subclaimcounter}[claimcounter]
\newtheorem*{subclaim*}{Subclaim}
\newtheorem{mainexample}[mainresult]{Example}
\crefname{mainexample}{Example}{Examples}
\newtheorem{example}[theorem]{Example}
\crefname{example}{Example}{Examples}
\theoremstyle{definition}
\newtheorem{construction}[theorem]{Construction}
\crefname{routine}{Routine}{Routines}
\crefname{subroutine}{Subroutine}{Subroutines}
\crefname{subsubroutine}{Subsubroutine}{Subsubroutines}
\crefname{step}{Step}{Steps}
\theoremstyle{remark}
\newcommand{\red}[1]{{\color{red}{#1}}}
\newcommand{\blue}[1]{{\color{blue}{#1}}}
\newcommand{\COMMENT}[1]{{}}
\let\eps=\varepsilon
\let\epsilon=\varepsilon
\let\theta=\vartheta
\let\rho=\varrho
\let\phi=\varphi
\def\N{\mathbb N}
\def\calCommandfactory#1{%
  \expandafter\def\csname c#1\endcsname{\mathcal{#1}}}
\def\frakCommandfactory#1{%
  \expandafter\def\csname frak#1\endcsname{\mathfrak{#1}}}
\newcounter{ctr}
  \edef\X{\@Alph\c@ctr}
\newcommand{\arXivOrNot}[2]{\ifbool{arXiv}{{#1}}{{#2}}}
\newcommand{\pdfOrNot}[2]{\ifbool{pdfBool}{{#1}}{{#2}}}
\newcommand{\change}[2]{\ifbool{TrackBool}{\blue{#1} \red{#2}}{{#1}}}
\newcommand{\td}{tree-decom\-posi\-tion}
\newcommand{\rd}{ray-decom\-posi\-tion}
\newcommand{\down}{{\downarrow}}
\newcommand{\up}{{\uparrow}}
\newcommand{\strictdown}{ \mathring{\downarrow} }
\newcommand{\strictup}{ \mathring{\uparrow} }
\DeclareMathOperator{\Dom}{Dom}
\newcounter{mylabelcounter}
\newcommand{\labelText}[2]{%
#1\refstepcounter{mylabelcounter}%
\immediate\write\@auxout{%
  \string\newlabel{#2}{{1}{\thepage}{{\unexpanded{#1}}}{mylabelcounter.\number\value{mylabelcounter}}{}}%
}%
}
\newcommand\footnoteref[1]{\protected@xdef\@thefnmark{\ref{#1}}\@footnotemark}
\renewcommand\boxtimes{\mathbin{\text{\scalebox{.84}{$\square$}}}}
\definecolor{cMaroon}{HTML}{93152a}
\newcommand{\defn}[1]{{\color{cMaroon}{\emph{#1}}}}
\newcommand{\defnm}[1]{{\color{cMaroon}{{#1}}}}
\title[Counterexamples regarding linked and lean tree-decompositions]{Counterexamples regarding linked and lean tree-decompositions of infinite graphs}
\author{Sandra Albrechtsen}
\author{Raphael~W.\ Jacobs}
\author{Paul Knappe}
\author{Max Pitz}
\address{University of Hamburg, Department of Mathematics, Bundesstraße 55 (Geomatikum), 20146 Hamburg, Germany}
\email{\{sandra.albrechtsen, raphael.jacobs, paul.knappe, max.pitz\}@uni-hamburg.de}
\keywords{Tree-decomposition, linked, lean, infinite graph, counterexample}
\subjclass[2020]{05C63, 05C05, 05C83, 05C40}
\begin{document}

\begin{abstract}
    K{\v r}{\'i}{\v z} and Thomas showed that every (finite or infinite) graph of tree-width $k \in \N$ admits a lean \td\ of width $k$.
    We discuss a number of counterexamples demonstrating the limits of possible generalisations of their result to arbitrary infinite tree-width.

    In particular, we construct a locally finite, planar, connected graph that has no lean \td.
\end{abstract}

\maketitle

\section{Introduction} \label{sec:Intro}

\subsection{Lean \td s}
A cornerstone in both Robertson and Seymour's work \cite{GMIV} on well-quasi-ordering finite graphs, and in Thomas's result \cite{thomas1989wqo} that the class of infinite graphs of tree-width~$< k$ is well-quasi-ordered under the minor relation for all~$k \in \N$, is K{\v r}{\'i}{\v z} and Thomas's result on lean \td s. 
Recall that a \td\ $(T, (V_t)_{t \in T})$ is \defn{lean} if for every two (not necessarily distinct) nodes $s,t \in T$ and  sets of vertices $Z_s \subseteq V_{s}$ and $Z_t \subseteq V_{t}$ with $|Z_s| = |Z_t| =: \ell \in \N$, either $G$ contains $\ell$ pairwise disjoint $Z_s$--$Z_t$ paths or there exists an edge $e =xy \in sTt$ whose corresponding adhesion set $V_e := V_x \cap V_y$ has size less than $\ell$.

\begin{theorem}[{Thomas 1990 \cite{LeanTreeDecompThomas}, K{\v r}{\'i}{\v z} and Thomas 1991  \cite{kriz1991mengerlikepropertytreewidth}}]
\label{thm_intro_krizthomas}
    For every $k \in \N$, every (finite or infinite) graph  of tree-width  $< k$ has a lean tree-decomposition of width $< k$.
\end{theorem}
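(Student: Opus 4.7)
The plan is to start with any \td\ of $G$ of width $< k$ and to improve it step by step, using Menger's theorem, until no leanness violation remains.

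Since $G$ has tree-width $< k$, fix a \td\ $\mathcal T = (T, (V_t)_{t \in T})$ of $G$ of width $< k$. To each such \td\ I would attach a complexity measure tracking how far it is from being lean: the natural candidate is the tuple $(n_{k-1}, n_{k-2}, \ldots, n_0)$, where $n_i$ denotes the number of adhesion sets of size exactly $i$, ordered lexicographically. A candidate lean \td\ is then one that minimises this measure among all \td s of $G$ of width $< k$.

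Suppose such a minimum-complexity \td\ $\mathcal T$ is not lean: then there exist $s, t \in T$ and sets $Z_s \subseteq V_s$, $Z_t \subseteq V_t$ with $|Z_s| = |Z_t| = \ell$ such that $G$ contains no $\ell$ pairwise disjoint $Z_s$--$Z_t$ paths, while every adhesion set along $sTt$ has size at least $\ell$. By Menger's theorem there is a vertex separator $S \subseteq V(G)$ with $|S| < \ell$ separating $Z_s$ from $Z_t$ in $G$. I would then modify $\mathcal T$ by subdividing a suitable edge of $sTt$ and inserting $S$ as the new (smaller) adhesion there, redistributing vertices across the new edge so as to preserve the width bound; this strictly reduces the complexity tuple, contradicting minimality.

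The hard part, and the technical heart of Thomas's theorem, is the infinite case. Here each $n_i$ may itself be infinite, so the lexicographic order on $\N^k$ no longer well-orders the admissible complexities, and naive minimisation fails. One must instead either carry out the improvement transfinitely and verify at each limit stage that the bag system remains a valid \td\ (using the bound $k \in \N$ on all adhesion sizes to apply a compactness-type limit argument), or construct the lean \td\ directly from a canonical system of minimum separators of order $< k$. It is precisely this limit control that breaks once one passes to unbounded tree-width, which is the starting point for the counterexamples of the present paper.
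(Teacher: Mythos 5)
First, note that the paper does not prove this statement at all: it is quoted as a known result of Thomas and of K{\v r}{\'i}{\v z}--Thomas, so there is no internal proof to compare against. Judged on its own terms, your proposal has a genuine gap already in the finite case: the improvement step is not correct as described. A minimum $Z_s$--$Z_t$ separator $S$ with $|S|<\ell$ splits $G$ into two sides $A$ and $B$, but the vertices of $A$ and of $B$ occur in bags scattered over all of $T$, not neatly on the two sides of some edge of $sTt$; subdividing a single edge of $sTt$ and ``inserting $S$'' there does not in general yield a \td\ of $G$ (the separation induced by the new edge is not the one given by $S$, and the subtree-connectedness of $\{u : v \in V_u\}$ breaks), and it also cannot handle the case $s=t$, where leanness must hold inside a single bag and there is no edge to subdivide. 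The standard surgery (Thomas; Bellenbaum--Diestel) instead takes \emph{two disjoint copies} of the whole tree $T$ joined by a new edge with adhesion $S$, sets the bags to $(V_u\cap A)\cup S_u$ on one copy and $(V_u\cap B)\cup S_u$ on the other, where $S_u$ is obtained by routing $S$ into $V_u$ along a Menger family, and then proves --- and this is the real work --- that the new bags are no larger and that a suitable lexicographic ``fatness'' (usually counting bags, rather than adhesion sets, by size) strictly decreases.

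Second, and more importantly for the present paper, the infinite case is the entire content of the K{\v r}{\'i}{\v z}--Thomas theorem, and your proposal does not prove it. You correctly observe that lexicographic minimisation is unavailable once the quantities $n_i$ may be infinite, but ``carry out the improvement transfinitely and verify the limit stages'' is precisely the difficult step, not an argument for it: there is no routine compactness principle that makes the limit of infinitely many such surgeries a tree-decomposition again, and the actual proof constructs the decomposition from a carefully organised transfinite family of separations of order $<k$. As written, your text would establish (after repairing the surgery) only the finite-graph case, whereas the theorem as stated --- and as used throughout this paper --- concerns arbitrary infinite graphs.
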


Is it possible to generalise \cref{thm_intro_krizthomas} from finite $k$ to arbitrary infinite cardinalities?
In the following let $\kappa$ be an infinite cardinal. A graph $G$ has \defn{tree-width $<\kappa$} if it admits a tree-decomposition of width $< \kappa$, i.e. one into parts of size $<\kappa$. A graph $G$ of tree-width $<\aleph_0$, i.e.\ with a tree-decomposition into finite parts, is said to have \defn{finite tree-width}.
The following questions arise naturally:

\begin{enumerate}
    \item \label{item:leantdintopartsofsizekappa} Does every graph of tree-width  $<\kappa$ admit a lean \td\ of width $<\kappa$? 
    In particular, does every graph of finite tree-width admit a lean \td\ into finite parts?
    \item \label{item:leantdintonotnecessarilyfiniteparts} If not, does every infinite graph at least admit a lean \td?
\end{enumerate}

\noindent Note that even without the width restriction, Question \ref{item:leantdintonotnecessarilyfiniteparts} remains non-trivial as the leanness-property has to be satisfied within each bag, which means that we cannot take the trivial \td\ into a single part, unless the graph is infinitely connected. Still, our main example shows that the answers to these questions are in the negative: 

\begin{mainexample} \label{main:NoLeanTD}
    There is a planar, locally finite, connected graph that admits no lean \td. 
\end{mainexample}

Every locally finite, connected graph is countable, and thus has tree-width $<\aleph_0$: given an arbitrary enumeration $\{v_i \colon i \in \N\}$ of a countable graph $G$, assigning to each vertex $r_i$ of a ray $R = r_0 r_1 \dots$ the bag $V_{r_i} := \{v_0, \dots, v_i\}$ yields a ray- and thus also \td\ $(R,\cV)$ of $G$ into finite parts.
Hence, the graph from \cref{main:NoLeanTD} witnesses that the answers to both questions \cref{item:leantdintopartsofsizekappa} and \cref{item:leantdintonotnecessarilyfiniteparts} are in the negative. 

On the positive side, we provide in \cite{LinkedTDInfGraphs}*{Theorem~3} a sufficient criterion that guarantees the existence of a lean \td\ into finite parts:

\begin{theorem}
\label{thm_LeanTDWithoutHalfGrid}
    Every graph without half-grid minor admits a lean \td\ into finite parts.
\end{theorem}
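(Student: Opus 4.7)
The plan combines Halin's grid theorem with a K\v{r}\'{\i}\v{z}--Thomas-style exchange argument adapted to the infinite setting. Halin's theorem characterises the absence of a half-grid minor as every end of $G$ containing only finitely many pairwise disjoint rays, so in particular $G$ has tree-width $<\aleph_0$ and every \td\ into finite parts of $G$ automatically has finite adhesion sets.

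First, I would reduce to $G$ connected, since disjoint unions of lean \td s into finite parts are again lean \td s into finite parts. Then I would construct a \td\ $(T, \cV)$ of $G$ into finite parts that is \emph{locally tight}: along every edge $e = st$ of $T$, the adhesion $V_e$ has the minimum order of any $V_s$--$V_t$ separator in $G$ that respects the bipartition induced by $e$. Starting from any \td\ into finite parts (for instance the ray-decomposition obtained from an enumeration of $V(G)$), one iteratively replaces each adhesion by a minimum separator of the same ``side''-structure via Menger's theorem applied on suitable finite subgraphs; the no-half-grid hypothesis is what guarantees that every minimum separator is realised by a finite path system, so each exchange preserves the finiteness of bags.

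Finally, I would verify leanness of $(T, \cV)$ directly. Given $s, t \in T$ and $Z_s \subseteq V_s$, $Z_t \subseteq V_t$ of common size $\ell$, apply Menger's theorem in $G$: either there are $\ell$ pairwise disjoint $Z_s$--$Z_t$ paths, or there is a $Z_s$--$Z_t$ separator $S$ of size $<\ell$. In the latter case, local tightness of $(T, \cV)$ implies that some adhesion $V_e$ along $sTt$ already has size at most $|S| < \ell$, which is exactly the second alternative in the definition of lean.

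The main obstacle is the construction of the locally tight decomposition: in the infinite setting, a naive iteration of exchanges may fail to stabilise, and with no finite ``extremal'' \td\ to minimise over, one risks pushing vertices out to infinity and producing infinite bags in the limit. The no-half-grid hypothesis is precisely what controls this: it rules out infinite systems of disjoint paths across any adhesion and thereby bounds how far each exchange propagates. I would organise the exchanges as a transfinite induction along a normal spanning tree of $G$ (which exists by countability), checking at limit stages that every vertex lies in only finitely many bags, so that the procedure terminates with a \td\ into finite parts that is locally tight.
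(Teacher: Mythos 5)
This theorem is not proved in the present paper: it is \cite{LinkedTDInfGraphs}*{Theorem~3}, and the proof given there (in \cite{LinkedTDInfGraphs}*{\S8}) works by post-processing the linked, tight and componental rooted \td\ supplied by \cref{thm:LinkedTightCompTreeDecomp}, not by an exchange argument of the kind you sketch. So your route is in any case different from the intended one; unfortunately, it also has a genuine gap in its final step.

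The gap is that leanness is a \emph{global} property, while your verification tries to derive it from a \emph{local}, per-edge condition. Whatever precise meaning you attach to ``locally tight'', it only constrains each adhesion set $V_e$ relative to its two incident bags. From a $Z_s$--$Z_t$ separator $S$ with $|S| < \ell$ you cannot conclude that some adhesion set along $sTt$ has order at most $|S|$: the separator $S$ need not lie anywhere near $sTt$, and nothing in your construction compares $|S|$ with the orders $|V_e|$. Most starkly, for $s = t$ the path $sTt$ has no edges, so the second alternative in the definition of lean is unavailable and your argument says nothing; leanness for $s = t$ requires every bag to be internally well-linked, which no condition on adhesion sets can deliver (the single bag $\{v_1,v_2,v_3\}$ of a path $v_1v_2v_3$ already fails it for $Z_1 = \{v_1,v_2\}$, $Z_2 = \{v_2,v_3\}$). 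This single-bag case is precisely where the difficulty and the half-grid hypothesis must enter: \cref{main:NoRootedlyLeanTreeDecomp} in \cref{sec:NoRootedlyLeanTD} exhibits a countable graph of finite tree-width for which \emph{every} \td\ into finite parts has a bag violating leanness for $s = t$. Even in the finite setting, the K{\v r}{\'i}{\v z}--Thomas and Bellenbaum--Diestel proofs rest on a global extremal choice of the decomposition, not on adjacent-bag minimality. Two further points to repair: a connected graph without half-grid minor need not be countable (e.g.\ an uncountable tree), so the normal spanning tree cannot be obtained ``by countability''; and the inference from ``no half-grid minor'' to ``tree-width $<\aleph_0$'' for uncountable graphs is itself the Robertson--Seymour--Thomas excluded-minor theorem, which \cite{LinkedTDInfGraphs} \emph{deduces from} \cref{thm:LinkedTightCompTreeDecomp}, so you must take care not to make the argument circular.
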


 Note that excluding the half-grid as a minor is sufficient but not necessary for the existence of lean \td s into finite parts: The countably infinite clique $K^{\aleph_0}$ contains the half-grid even as a subgraph but also the above described \rd\ of any given countable graph~$G$ into finite parts is lean for $G = K^{\aleph_0}$.

As the graph from \cref{main:NoLeanTD} is planar, it has no $K^5$ minor, and thus no $K^{\aleph_0}$ minor. 
Hence, in terms of excluded minors, the gap between our positive result \cref{thm_LeanTDWithoutHalfGrid} and our negative result \cref{main:NoLeanTD} is quite narrow.
Nevertheless, it remains open to exactly characterise the graphs which admit a lean \td\ (into finite parts, or more generally, of width $<\kappa$).

\subsection{Linked \td s}

Since the answers to questions \cref{item:leantdintopartsofsizekappa} and \cref{item:leantdintonotnecessarilyfiniteparts} are in the negative, it is natural to ask what happens if we weaken the condition that the \td\ be lean.
One possible such weakening is suggested by the `linkedness'-property, which was extensively studied in \cites{erde2018unified,bellenbaum2002two}: We say a \td\ $(T, \cV)$ of a graph $G$ is 
\begin{itemize}
    \item \defn{strongly linked} if for every two nodes $s \neq t$ of $T$ there are $\min\{|V_e| \colon e \in E(sTt)\}$ pairwise disjoint $V_s$--$V_{t}$ paths in $G$.
\end{itemize}

\noindent One may further weaken `strongly linked' by requiring the existence of disjoint $V_s$--$V_t$ paths only between nodes $s, t \in T$ that are `comparable'.
For this, recall that given a tree $T$ rooted at a node~$r$, its \defn{tree-order} is given by $s \leq t$ for nodes $s, t$ of $T$ if~$s$ lies on the (unique) path \defn{$rTt$} from $r$ to~$t$.
Thomas~\cite{thomas1989wqo} defined a rooted \td\ $(T, \cV)$ of a graph $G$ to be 

\begin{itemize}
    \item \defn{linked} if for every two comparable nodes $s < t$ of the rooted tree $T$ there are $\min\{|V_e| : e \in E(sTt)\}$ pairwise disjoint $V_s$--$V_{t}$ paths in $G$.
\end{itemize}

\noindent These weakenings of the `leanness'-property are motivated by the fact that for the aforementioned applications of \cref{thm_intro_krizthomas} by Robertson and Seymour \cite{GMIV}  and by Thomas \cite{thomas1989wqo} it is only important that the rooted \td\ is linked.
\medskip

So what happens if we replace the condition \emph{lean} in \cref{item:leantdintopartsofsizekappa} and \cref{item:leantdintonotnecessarilyfiniteparts} by \emph{strongly linked} or \emph{linked}?
As the trivial \td\ into a single part is always strongly linked, and thus every graph has a strongly linked \td, only the weakened version of \cref{item:leantdintopartsofsizekappa}, but not of \cref{item:leantdintonotnecessarilyfiniteparts}, is interesting:

\begin{enumerate}
    \setcounter{enumi}{2}
    \item \label{item:stronglylinked} Does every graph of tree-width $< \kappa$ admit a strongly linked \td\ of width~$<\kappa$?
\end{enumerate}

However, question \cref{item:stronglylinked} is trivially true for all infinite cardinalities $\kappa$: 
By definition, every graph $G$ of tree-width $<\kappa$ admits a \td\ $(T, \cV)$ of width $<\kappa$. Choose an arbitrary root $r$ of $T$.
By assigning to each of the nodes~$t$ of $T$ the bag $V'_t := \bigcup_{s \in rTt} V_s$ we obtain a (rooted) \td\ $(T,\cV')$ of width $<\kappa$ that is strongly linked; albeit for the trivial reason that~$s \neq t \in T$ implies~$V'_s \cap V'_t = V'_u$ where~$u$ is the $\leq$-minimal node in $sTt$.

But this \td\ is not useful in practice, and so one would like to have some additional properties making the \td\ less redundant.
Especially linked rooted \td s into finite parts which are additionally `tight' and `componental' turned out to be a powerful tool (cf.\ \cite{LinkedTDInfGraphs}*{\S1.2-1.4} and \cite{SATangleTreeDualityInfGraphs}; see paragraph after \cref{thm:LinkedTightCompTreeDecomp} below for more details).
Given a rooted \td\ $(T, \cV)$ of a graph~$G$ and an edge $e$ of $T$, we call the subgraph \defn{$G \strictup e$} of~$G$ induced on $\bigcup_{t \in T_e} V_t \setminus V_e$ the \defn{part strictly above $e$}, where $T_e$ is the unique component of $T-e$ that does not contain the root of $T$. Then $(T, \cV)$ is

\begin{itemize}
    \item \defn{componental} if all the parts $G \strictup e$ strictly above edges $e \in E(T)$ are connected, and
    \item \defn{tight} if for every edge $e \in E(T)$ there is some component $C$ of $G \strictup e$ such that $N_G(C) = V_e$.
\end{itemize}

\noindent We remark that the above strongly linked (rooted) \td\ $(T, \cV')$ is componental if $(T, \cV)$ was componental; but even if $(T, \cV)$ was tight, $(T, \cV')$ may no longer be tight.

The property \emph{tight} ensures that the adhesion sets contain no `unnecessary' vertices. Any given componental rooted \td\ can easily be transformed into a tight and componental rooted \td\ by deleting for every edge $e$ of $T$ the non-neighbours of $G \strictup e$ in $V_e$ from every $V_t$ with $t \in T_e$.
While this construction obviously does not increase the width, it does not necessarily maintain the property \emph{(strongly) linked}. 
So it is natural to strengthen \cref{item:stronglylinked} to ask whether there exists a \td\ which has all three properties:

\begin{enumerate}
    \setcounter{enumi}{3}
    \item \label{item:stronglylinkedPlusTC} Does every graph of tree-width $<\kappa$ admit a tight and componental rooted \td\ of width $<\kappa$ that is strongly linked?
    \item \label{item:linkedPlusTC} If not, does every graph of tree-width $<\kappa$ admit a tight and componental rooted \td\ of width $<\kappa$ that is at least linked?
\end{enumerate}

The answer to Question~\ref{item:stronglylinkedPlusTC} is in the negative already for $\kappa = \aleph_0$, as witnessed by the same graph that we constructed for \cref{main:NoLeanTD}:

\begin{mainexample} \label{main:NoUnrootedLinkedTD}
There is a planar, locally finite, connected graph which admits no tight, componental rooted \td\ into finite parts that is strongly linked. 
\end{mainexample}

Question~\ref{item:linkedPlusTC}, however, has an affirmative answer for $\kappa = \aleph_0$ \cite{LinkedTDInfGraphs}*{Theorem 1}:

\begin{theorem} \label{thm:LinkedTightCompTreeDecomp}
    Every graph of finite tree-width admits a rooted \td\ into finite parts that is linked, tight and componental.
\end{theorem}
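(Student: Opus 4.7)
The plan is to derive \cref{thm:LinkedTightCompTreeDecomp} from the K{\v r}{\'i}{\v z}--Thomas theorem (\cref{thm_intro_krizthomas}). Since $G$ has finite tree-width, \cref{thm_intro_krizthomas} furnishes a lean \td\ $(T,\cV)$ of $G$ of finite width, and hence into finite parts. I would first observe that every lean \td\ is strongly linked: given two distinct nodes $s,t$ of $T$, setting $\ell := \min\{|V_e| : e \in E(sTt)\}$ and taking $Z_s \subseteq V_s$ and $Z_t \subseteq V_t$ to be arbitrary $\ell$-subsets, the leanness premise is met, yielding $\ell$ pairwise disjoint $Z_s$--$Z_t$ paths, and hence $V_s$--$V_t$ paths, in $G$. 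Rooting $T$ at any node then turns $(T,\cV)$ into a linked rooted \td\ of finite width, and it remains to modify $(T,\cV)$ to be additionally componental and tight without destroying linkedness.

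To enforce the componental property, I would apply the standard refinement to every edge $e$ of the rooted $T$ whose part $G \strictup e$ is disconnected. Writing $(C_i)_{i \in I}$ for the components of $G \strictup e$, I would replace the subtree $T_e$ below $e$ by $|I|$ disjoint copies $(T_e^{(i)})_{i\in I}$, all attached to the same parent endvertex of $e$, with bags $V_t^{(i)} := V_t \cap (C_i \cup V_e)$ for every $t \in T_e$ and every $i \in I$, and I would contract away any node whose new bag is contained in $V_e$. Iterating this over all such edges produces a rooted \td\ in which every part $G \strictup e$ is connected; since all new adhesion sets are subsets of their originals, the width remains finite.

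For the tightness property, for every edge $e$ of the resulting componental rooted \td, with unique component $C_e$ of $G \strictup e$, I would delete from $V_e$, and hence from every bag $V_t$ with $t \in T_e$, all vertices that lie outside $N_G(C_e)$. This preserves the \td\ axioms and the componental property, while only shrinking bags and adhesion sets.

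The main obstacle is to verify that linkedness survives both refinement steps. Since neither operation introduces any new vertex into any bag, every family of pairwise disjoint paths in $G$ witnessing linkedness in the original lean \td\ is still present in $G$. What requires care is matching a pair $s < t$ of comparable nodes in the refined rooted tree with a pair $s^*, t^*$ of nodes in the original $T$ satisfying $V_s \subseteq V_{s^*}$, $V_t \subseteq V_{t^*}$, and $|V_{e^*}| \geq \ell$ for every edge $e^*$ on $s^* T t^*$, where $\ell := \min_{e \in E(sTt)} |V_e|$ is the new adhesion minimum along $sTt$. Given such a match, an application of leanness at $(s^*, t^*)$ to arbitrary $\ell$-subsets $Z_s \subseteq V_s$ and $Z_t \subseteq V_t$ then yields $\ell$ pairwise disjoint $V_s$--$V_t$ paths in $G$, completing the verification. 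The bookkeeping needed for this matching, in particular controlling how the componental splitting and the tight trimming affect adhesion sizes along paths in the tree, is where the bulk of the technical work will lie.
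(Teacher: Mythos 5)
Your very first step fails, and not for a repairable technical reason. The hypothesis of \cref{thm:LinkedTightCompTreeDecomp} is that $G$ has \emph{finite tree-width}, which in this paper means tree-width $<\aleph_0$, i.e.\ that $G$ admits some \td\ into finite parts. This is strictly weaker than having tree-width $<k$ for a single finite $k$: a graph containing arbitrarily large finite grids, say, has finite tree-width but unbounded tree-width. \cref{thm_intro_krizthomas} only applies in the bounded case, so it does not furnish the lean \td\ you want to start from. Worse, the gap cannot be bridged by any other means: \cref{main:NoLeanTD} exhibits a planar, locally finite, connected graph --- hence countable, hence of finite tree-width --- that admits \emph{no} lean \td\ whatsoever, while \cref{thm:LinkedTightCompTreeDecomp} still applies to it. So any proof of \cref{thm:LinkedTightCompTreeDecomp} that begins by producing a lean (or even strongly linked, tight, componental --- see \cref{main:NoUnrootedLinkedTD}) \td\ is structurally doomed; the theorem genuinely requires constructing a merely \emph{linked} rooted \td\ directly, which is why the paper cites an external, substantially more involved construction rather than deducing it from K{\v r}{\'i}{\v z}--Thomas.

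Even restricted to graphs of bounded tree-width, where your starting point is legitimate, the later steps are not innocuous: the componental splitting and the tight trimming both shrink adhesion sets, so the minimum $\ell$ along a path $sTt$ in the new tree can drop below the minimum along the corresponding path in the old tree, and the $\ell$ disjoint paths supplied by leanness in the original decomposition may then fail to start and end in the trimmed bags $V_s$ and $V_t$ (leanness gives paths between \emph{chosen} $\ell$-subsets of the \emph{old} bags, which is fine, but you must also ensure the witnessing paths for tightness survive the trimming of \emph{other} adhesion sets they pass through). You correctly flag this bookkeeping as the bulk of the work, but as it stands it is an unverified claim layered on top of an unusable first step.
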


We remark that \cref{item:linkedPlusTC} remains open for uncountable cardinalities $\kappa > \aleph_0$. If the answer is positive, one might, as a next step, also strengthen the notion of \emph{linked} from just considering sizes to a structural notion that encapsulates the typical desired behaviour of infinite path families between two sets, as it is given by Menger's theorem for infinite graphs \cite{infinitemenger}*{Theorem~1.6} proven by Aharoni and Berger.
\medskip

It turns out that linked rooted \td s into finite parts which are additionally tight and componental, as given by \cref{thm:LinkedTightCompTreeDecomp}, are particularly useful (cf.\ \cite{LinkedTDInfGraphs}*{\S1.2-1.4} and \cite{SATangleTreeDualityInfGraphs}):
In \cite{LinkedTDInfGraphs}*{\S3}, we show that rooted \td s into finite parts which are linked, tight and componental display the end structure of the underlying graph.
This not only resolves a question of Halin \cite{halin1977systeme}*{\S6} but also allows us to deduce from \cref{thm:LinkedTightCompTreeDecomp}, by means of short and unified proofs, the characterisations due to Robertson, Seymour
and Thomas of graphs without half-grid minor \cite{robertson1995excluding}*{Theorem 2.6}, and of graphs without binary tree subdivision \cite{seymour1993binarytree}*{(1.5)}.
Also the proof of \cref{thm_LeanTDWithoutHalfGrid} in \cite{LinkedTDInfGraphs}*{\S8} heavily relies on post-processing the \td\ from \cref{thm:LinkedTightCompTreeDecomp}.
Beside these, there are more applications of rooted \td\ into finite parts which are linked, tight and componental in \cite{LinkedTDInfGraphs}*{\S1.4} and \cite{SATangleTreeDualityInfGraphs}.

In fact, we show in \cite{LinkedTDInfGraphs}*{Theorem~1'} a more detailed version of \cref{thm:LinkedTightCompTreeDecomp} which, among others, yields that the adhesion sets of the \td\ intersect `not more than necessary'. We also give an example which proves that this property is best possible even for locally finite graphs (see \cref{sec:IncDisj} for details).
\medskip

In the light of Question~\cref{item:linkedPlusTC} being true for $\kappa = \aleph_0$, one may ask whether a similar modification could rescue \cref{item:leantdintopartsofsizekappa} for $\kappa = \aleph_0$: What happens if we relax the condition \emph{lean} in \cref{item:leantdintopartsofsizekappa}  to a corresponding `rooted' version?

\begin{enumerate}
    \setcounter{enumi}{5}
    \item \label{item:RootedlyLeanTD} Does every graph of finite tree-width admit a rooted \td\ into finite parts that satisfies the property of being lean for all \emph{comparable} nodes $s \leq t$ of $T$?
\end{enumerate}

\noindent However, the answer to this question is again in the negative, as there is a graph of finite tree-width such that all its \td s into finite parts violate  the `leanness'-property within a single bag:

\begin{mainexample} \label{main:NoRootedlyLeanTreeDecomp}
    There is a countable graph $G$ such that every \td\ of $G$ into finite parts has a bag $V_t$ which violates the property of being lean for $s = t$.
\end{mainexample}

We remark that we do not know whether every \td\ which satisfies the `leanness'-property for every two comparable nodes must already be lean.

\subsection{How this paper is organised}

In \cref{sec:Prelims} we recall some important definitions and facts about ends and their interplay with \td s. In \cref{sec:NoLeanTD} we prove \cref{main:NoLeanTD,main:NoUnrootedLinkedTD}, and in \cref{sec:NoRootedlyLeanTD} we prove \cref{main:NoRootedlyLeanTreeDecomp}. Finally, in \cref{sec:IncDisj}, we discuss whether it is possible to strengthen \cref{thm:LinkedTightCompTreeDecomp} so that the adhesion sets of the \td\ are `upwards' disjoint.

\section{Preliminaries} \label{sec:Prelims}

We refer the reader to \cite{LinkedTDInfGraphs}*{\S2} for all the relevant definitions.
Besides that, our notation and definitions follow \cite{bibel}.
In particular, the \defn{neighbourhood $N_G(X)$} of a set $X$ of vertices in a graph $G$ is the set of all vertices in $V(G) \setminus X$ that are neighbours of some $x \in X$.
For convenience of the reader we recall the ones which are most important in this paper.

Let $(T, \cV)$ be a rooted \td\ of a graph~$G$.
Given an edge~$e = t_1 t_2$ of~$T$ with~$t_1 <_T t_2$, we let~$T_i$ be the component of $T-e$ containing~$t_i$ for $i =1,2$. 
We abbreviate the sides of its induced separation by~$G \down e := G \left[ \bigcup_{t \in T_1} V_t \right]$ and~$G \up e := G \left[ \bigcup_{t \in T_2} V_t \right]$.
Further, we write~$G \strictdown e := G \down e - V_e$ and~$G \strictup e := G \up e - V_e$.
It is easy to see that $G \up e \supseteq G \up f$, $G \strictup e \supseteq G \strictup f$, $G \down e \subseteq G \down f$ and $G \strictdown e \subseteq G \strictdown f$ for every two edges $e, f \in E(T)$ with $e \leq_T f$, and also~$\bigcap_{e \in R} G\strictup e = \emptyset$ for every rooted ray~$R$ in~$T$.

An \defn{end} $\eps$ of a graph $G$ is an equivalence class of rays in $G$ where two rays are equivalent if for every finite set $X$ they have a tail in the same component of~$G-X$. We refer to this component of $G-X$ as $C_G(X, \eps)$.
Write \defn{$\Omega(G)$} for the \defn{set of all ends} of $G$.
Note that for every end $\eps$ of $G$ and every set $X$ of vertices of $G$ which meets every $\eps$-ray at most finitely often, there is a unique component of $G-X$ that contains a tail of every $\eps$-ray. We refer to this component as \defn{$C_G(X, \eps)$}, which coincides with above definition for finite $X$.

A vertex $v$ of $G$ \defn{dominates} an end $\eps$ of $G$ if it lies in $C_G(X, \eps)$ for every finite set $X$ of vertices other than $v$.
We remark that locally finite graphs have no dominating vertices.
We denote the set of all vertices of $G$ which dominate an end $\eps$ by \defn{$\Dom(\eps)$}.
The \defn{degree $\deg(\eps)$} of an end $\eps$ of $G$ is the supremum over all cardinals $\kappa$ such that there exists a set of $\kappa$ pairwise disjoint rays in $\eps$, and its \defn{combined degree} is~$\defnm{\Delta(\eps)} := \deg(\eps) +|\Dom(\eps)|$.

Given a rooted \td\ $(T, \cV)$ of $G$, an end $\eps$ of $G$ \defn{gives rise} to a rooted ray $R$ in $T$ if every $\eps$-ray meets every bag $V_t$ with $t \in R$ at most finitely often, and for every $e = st \in T$ with $s < t$ we have $C_G(V_s, \eps) \subseteq G \strictup e$\footnote{We remark that this definition extends the one given in \cite{LinkedTDInfGraphs}*{\S2.2} to a larger class of \td s which do not necessarily have finite adhesion.}.
Note that if such a ray exists, then it is unique, and we denote it by $R_\eps$.
Moreover, if every ray in an end $\eps$ of $G$ meets every bag of a \td\ $(T, \cV)$ at most finitely often, then $\eps$ gives rise to a ray in $T$.
If the bags of a rooted \td\ $(T, \cV)$ meet all rays in $G$ at most finitely often, the above yields a map~$\phi \colon \Omega(G) \to \Omega(T),\; \eps \mapsto R_\eps$. 
Such a rooted \td~$(T, \cV)$
\begin{itemize}
\item  \defn{displays} the ends of $G$ if $\phi$ is a bijection  \cite{carmesin2019displayingtopends},
\item \defn{displays all dominating vertices} if $\liminf_{e \in E(R_\eps)} V_e = \Dom(\eps)$\footnote{The set-theoretic $\liminf_{n \in \N} A_n$ consists of all points that are contained in all but finitely many $A_n$. For a ray $R= v_0e_0v_1e_1v_1 \dots$ in $T$, one gets $\liminf_{e \in E(R_\eps)} V_e = \bigcup_{n \in \N} \bigcap_{i \geq n} V_{e_i}$.} for all $\eps \in \Omega(G)$, and
\item \defn{displays all combined degrees} if $\liminf_{e \in E(R_\eps)} |V_e| = \Delta(\eps)$ for all $\eps \in \Omega(G)$. 
\end{itemize}

Moreover, we recall the following lemma \cite{LinkedTDInfGraphs}*{Lemma~3.3}:

\begin{lemma} \label{lem:linkedandliminfareonlydominatingverticesimpliesdisplayenddegrees}
    Let $(T, \cV)$ be a linked rooted \td\ of a graph~$G$ which has finite adhesion. Suppose that an end $\eps$ of $G$ gives rise to a ray in $T$ which arises from no other end of~$G$ and that $\liminf_{e \in R} V_e = \Dom(\eps)$.
	Then $\liminf_{e \in R} |V_e| = \Delta(\eps)$.
\end{lemma}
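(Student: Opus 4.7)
Enumerate $R = t_0 t_1 t_2 \dots$ with edges $e_n := t_n t_{n+1}$, and set $D := \Dom(\eps)$. The plan is to prove the identity $\liminf_{e \in R} |V_e| = \Delta(\eps)$ by showing the two inequalities separately.

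For the easy direction $\liminf_{e \in R} |V_e| \geq \Delta(\eps)$, I would fix an arbitrary finite $N \leq \Delta(\eps)$, write $N = m + d'$ with $m \leq \deg(\eps)$ and $d' \leq |D|$, pick $m$ pairwise disjoint $\eps$-rays $\rho_1, \dots, \rho_m$ in $G$, and truncate them to avoid a chosen set of $d'$ dominating vertices $v_1, \dots, v_{d'} \in D$. For every sufficiently large $n$, each $v_j$ lies in $V_{e_n}$ (by the hypothesis $\liminf_{e \in R} V_e = D$), and each $\rho_i$ crosses $V_{e_n}$ (since its tail lies in $G \strictup e_n$ by the hypothesis that $\eps$ gives rise to $R$). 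Disjointness of these $N$ objects forces $|V_{e_n}| \geq N$, so $\liminf_{e \in R} |V_e| \geq N$, and since $N \leq \Delta(\eps)$ was arbitrary the inequality follows.

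For the main direction $\liminf_{e \in R} |V_e| \leq \Delta(\eps)$, set $k := \liminf_{e \in R} |V_e|$ and assume for concreteness that $k$ is finite (the infinite case runs the same argument for arbitrary finite $N \leq k$ and takes a supremum). Pick $n_0$ with $|V_{e_n}| \geq k$ and $D \subseteq V_{e_n}$ for every $n \geq n_0$. Applying linkedness to the comparable pair $t_{n_0} < t_m$ yields, for each $m > n_0$, at least $k$ pairwise disjoint $V_{t_{n_0}}$--$V_{t_m}$ paths in $G$; since the paths are disjoint and the $|D|$ dominating vertices lie on at most $|D|$ of them, at least $k - |D|$ of the paths avoid $D$ entirely. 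A diagonal compactness argument --- pigeonholing over the finitely many crossing patterns per adhesion, then diagonalising over $m \to \infty$ --- extracts a subsequence of $m$'s along which these $k - |D|$ $D$-avoiding paths have a stable crossing pattern through every $V_{e_n}$, yielding stable traces $(\sigma_n(i))_{n \geq n_0}$ with $\sigma_n(i) \in V_{e_n} \setminus D$. Combining the stable traces with the sub-paths of the $P_i^m$ between consecutive adhesions assembles $k - |D|$ pairwise disjoint rays in $G - D$ whose tails lie in $G \strictup e_n$ for every $n \geq n_0$; the uniqueness-of-end hypothesis forces these rays to belong to $\eps$, so $\deg(\eps) \geq k - |D|$ and hence $\Delta(\eps) = \deg(\eps) + |D| \geq k$.

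The main obstacle is the compactness/assembly step in the second direction. The finiteness of each adhesion set makes the pigeonhole tractable, but because the bags $V_t$ need not be finite, König's lemma does not directly yield rays from the stable crossing patterns; one has to carefully concatenate the sub-paths of the $P_i^m$ between consecutive adhesions so that the resulting rays remain pairwise disjoint and avoid $D$ in the limit. The unique-end hypothesis is then essential for identifying the constructed rays with the end $\eps$ rather than with some sibling end whose rays happen to dwell infinitely in bags along $R$.
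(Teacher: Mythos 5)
First, a caveat: this paper does not prove the lemma itself but imports it verbatim from \cite{LinkedTDInfGraphs}*{Lemma~3.3}, so your argument can only be compared with the standard proof of such statements, not with a proof printed here. Your architecture is the right one: the inequality $\liminf_{e\in R}|V_e|\geq\Delta(\eps)$ by locating disjoint rays and dominating vertices inside almost every adhesion set, and the converse by converting the linkages between $t_{n_0}$ and arbitrarily late nodes into $k-|\Dom(\eps)|$ pairwise disjoint rays of $\eps$ avoiding $D:=\Dom(\eps)$. Your final step is also correct in spirit but unverified: to invoke the ``arises from no other end'' hypothesis you must check that the end $\omega$ of each assembled ray actually \emph{gives rise} to $R$ in the technical sense, i.e.\ both that every $\omega$-ray meets every bag $V_{t_n}$ only finitely often (the bags may be infinite; this follows because the finite sets $V_{e_m}$ force every $\omega$-ray to have a tail in $G\strictup e_m$, which is disjoint from $V_{t_n}$ for $m\geq n$) and that $C_G(V_{t_n},\omega)\subseteq G\strictup e_n$.

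The genuine gap is the one you flag yourself: the assembly of the rays. As described --- stabilise traces on each $V_{e_n}$ and then ``combine the sub-paths of the $P_i^m$ between consecutive adhesions'' --- the step fails, because sub-paths borrowed from different families $\cP_m$ for different slices need not be disjoint from one another, and a single path may visit $V_{e_n}$ and $V_{e_{n+1}}$ in an interleaved order when these sets share vertices outside $D$. Two observations close it. First, every vertex outside $D=\liminf_{e\in R}V_e$ lies in $V(G\up e_n)$ for only finitely many $n$ (otherwise the connected subtree of nodes whose bags contain it would cross arbitrarily late edges of $R$, putting the vertex into $\liminf_{e\in R}V_e$); hence one may pass to a subsequence of edges $e_{n_0},e_{n_1},\dots$ with $|V_{e_{n_j}}|=k$ and $D\subseteq V_{e_{n_j}}$ such that the sets $V_{e_{n_j}}\setminus D$ are pairwise disjoint and $V_{e_{n_{j+1}}}\setminus D\subseteq G\strictup e_{n_j}$. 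This observation also repairs the small elision in your easy direction: it is what guarantees that the starting vertices of the truncated rays eventually leave $G\up e_n$, so that each ray must indeed cross $V_{e_n}$. Second, instead of diagonalising over one long sequence of linkages, apply linkedness to consecutive pairs $t_{n_j}<t_{n_{j+1}+1}$: since $|V_{e_{n_j}}|=|V_{e_{n_{j+1}}}|=k$, each of the $k$ disjoint paths meets each of these two adhesion sets exactly once, so the linkage induces a bijection $V_{e_{n_j}}\to V_{e_{n_{j+1}}}$ fixing $D$ pointwise, and the segment of each path between its two crossing points lies in $G\up e_{n_j}\cap G\down e_{n_{j+1}}$. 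Consecutive linkages therefore meet only in the sets $V_{e_{n_{j+1}}}\setminus D$, where they join up along the bijections, and non-consecutive ones are disjoint; the concatenation yields the desired $k-|D|$ disjoint $D$-avoiding rays with no compactness argument at all. Without some such confinement your construction does not produce disjoint rays, so as written the proposal is an honest but incomplete sketch.
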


The following lemma follows immediately from \cite{LinkedTDInfGraphs}*{Lemmas~3.1, 3.2 \& 3.3}:

\begin{lemma} \label{lem:LinkedTightCompImpliesDisplayingEndStructure}
    Let $(T, \cV)$ be a rooted \td\ of a graph $G$ which has finite adhesion and which is linked, tight and componental. Then $(T, \cV)$ displays all ends of $G$, their dominating vertices and their combined degrees. \qed
\end{lemma}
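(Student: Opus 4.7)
The plan is to split the statement into its three components and to handle each by verifying the hypotheses of one of \cite{LinkedTDInfGraphs}*{Lemmas~3.1, 3.2 \& 3.3}. Concretely, I want to show (a) the map $\phi\colon \Omega(G) \to \Omega(T),\ \eps \mapsto R_\eps$ is well-defined and bijective; (b) $\liminf_{e \in E(R_\eps)} V_e = \Dom(\eps)$ for every $\eps \in \Omega(G)$; and (c) $\liminf_{e \in E(R_\eps)} |V_e| = \Delta(\eps)$ for every $\eps \in \Omega(G)$. These are precisely the three properties ``displays the ends'', ``displays all dominating vertices'', and ``displays all combined degrees''.

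For (a), finite adhesion ensures that every ray of $G$ meets every bag $V_t$ at most finitely often, so every end of $G$ gives rise to a well-defined ray $R_\eps$ in $T$, and injectivity of $\phi$ follows from the standard separation property of \td s together with the finiteness of the adhesion sets. Surjectivity is the substantial step and relies on both tightness and componentality: given a ray $t_0 e_0 t_1 e_1 t_2 \dots$ in $T$, tightness supplies, for each $n$, a component $C_n$ of $G \strictup e_n$ with $N_G(C_n) = V_{e_n}$, and componentality together with tightness forces the containment $G \strictup e_{n+1} \subseteq G \strictup e_n$. Since the $V_{e_n}$ are finite, the $C_n$ form a descending nested sequence of infinite connected subgraphs, and a standard diagonal argument produces an end $\eps$ of $G$ whose associated $T$-ray is $t_0 t_1 t_2 \dots$.

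For (b), the inclusion $\Dom(\eps) \subseteq \liminf_{e \in E(R_\eps)} V_e$ is the easy direction: any $v \in \Dom(\eps)$ that lay strictly above or strictly below $e$ for cofinally many $e \in E(R_\eps)$ could be separated from $\eps$ by the finite set $V_e$, contradicting domination. The reverse inclusion uses tightness once more: for $v \in \liminf V_e$, the tight component $C_n$ from (a) contains, for all sufficiently large $n$, both a neighbour of $v$ and a tail of some $\eps$-ray, and stitching these together into an infinite $v$--$\eps$ fan shows that $v \in \Dom(\eps)$.

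Finally, for (c) I would apply \cref{lem:linkedandliminfareonlydominatingverticesimpliesdisplayenddegrees} directly to each end $\eps$: by (a), the ray $R_\eps$ arises from no other end; by (b), we have $\liminf_{e \in E(R_\eps)} V_e = \Dom(\eps)$; and linkedness and finite adhesion are hypotheses. The lemma then yields $\liminf_{e \in E(R_\eps)} |V_e| = \Delta(\eps)$, giving (c). I expect the main obstacle to be the surjectivity step in (a): it is the only place where tightness and componentality genuinely interact, and carefully controlling the nested sequence $(C_n)_{n \in \N}$ of components $G \strictup e_n$ along a ray in $T$ is where most of the technical work lies.
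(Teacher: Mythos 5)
Your proposal takes essentially the same route as the paper, whose proof of this lemma consists precisely of splitting the statement into the three display properties and citing Lemmas~3.1, 3.2 and~3.3 of \cite{LinkedTDInfGraphs} --- the last of which is restated here as \cref{lem:linkedandliminfareonlydominatingverticesimpliesdisplayenddegrees} and is applied in your step (c) exactly as intended. The one caveat is your opening claim in (a) that finite adhesion alone forces every ray to meet every bag finitely often: bags may be infinite under the stated hypotheses, and this really uses the restriction to finite parts under which the lemma is applied throughout the paper (and under which the cited companion lemmas operate).
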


\section{\texorpdfstring{\cref{main:NoLeanTD,main:NoUnrootedLinkedTD}}{Examples 1 and 2} -- negative answers to questions \ref{item:leantdintopartsofsizekappa},\ref{item:leantdintonotnecessarilyfiniteparts} and \ref{item:stronglylinkedPlusTC}} \label{sec:NoLeanTD}

In this section we explain \cref{main:NoLeanTD,main:NoUnrootedLinkedTD}, which we restate here for convenience:

\begin{customex}{\cref{main:NoLeanTD}}\label{main:NoLeanTD:copy}
    There is a planar, locally finite, connected graph that admits no lean \td.
\end{customex}

\begin{customex}{\cref{main:NoUnrootedLinkedTD}}
\label{main:NoUnrootedLinkedTD:cop}
    There is a planar, locally finite, connected graph which admits no tight, componental rooted \td\ into finite parts that is strongly linked.
\end{customex}

\begin{figure}[h!t]
    \centering
    \pdfOrNot{\includegraphics[page=2]{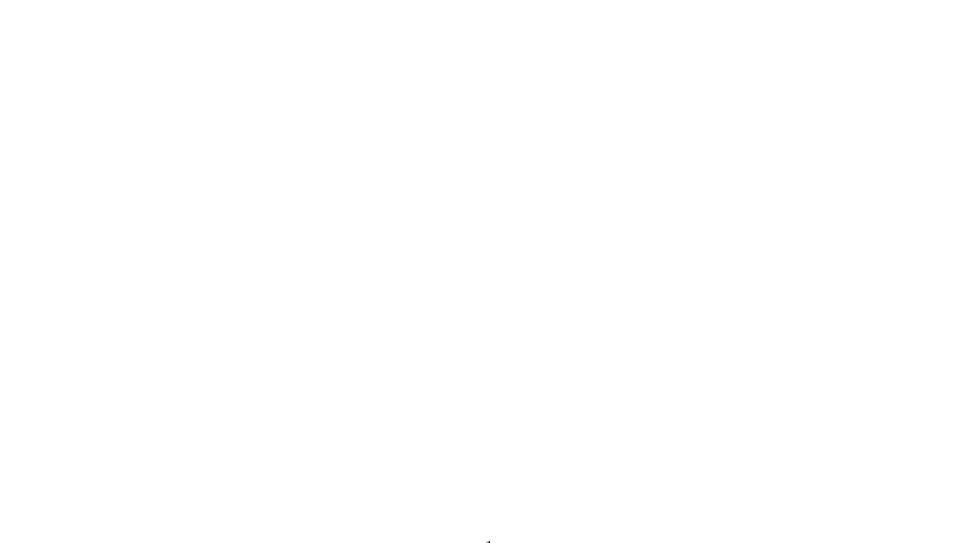}}{\includesvg[width=1.03\columnwidth]{svg/planarexample.svg}}
    \caption{Depicted is the graph $G$ from \cref{const:THEGraph}. The subgraph induced by the orange edges is $G_2$ together with the extensions of its horizontal rays.}
    \label{fig:THEGraph}
\end{figure}

For our proofs of \cref{main:NoLeanTD,main:NoUnrootedLinkedTD} we construct a graph $G$ in \cref{const:THEGraph} below, and then show that $G$ is already as desired for both \cref{main:NoLeanTD,main:NoUnrootedLinkedTD}.
The graph in this construction is inspired by~\cite{carmesin2022canonical}*{Example~7.4}.\footnote{The example is only presented in the arXiv version of \cite{carmesin2022canonical}.}

\begin{construction} \label{const:THEGraph}
Let $G$ be the graph depicted in \cref{fig:THEGraph}. Formally, let $G'$ be the graph on the vertex set $V(G') := \{(i/2^j, j) \mid j \in \N, 0 \leq i \leq 2^{j+1}\}$ and with edges between $(i/2^j, j)$ and $((i+1)/2^j, j)$, between $(i/2^j, j)$ and $(i/2^j, j+1)$, and also between $(i/2^j, j)$ and $((2i-1)/2^{j+1}, j+1)$ for $i \leq 2^j$ (this is the black subgraph in \cref{fig:THEGraph}). 
Note that $G'$ has a unique end, which we denote by $\eps$.

For every $n \in \N_{\geq 1}$, let $G_n := \N_{\geq 1} \boxtimes P_{2^{n+1}
 + 2^n + 1}$ be a grid with $2^{n+1} + 2^n + 2$ rows and infinitely many columns. 
Then $G_n$ is one-ended and its end $\eps_n$ has degree $2^{n+1}  + 2^n + 2$.
Now the graph $G$ is obtained from $G' \sqcup \bigsqcup_{n \in \N} G_n$ by deleting for every $n \in \N$ the edge $\{(2,n), (2, n+1)\}$ of $G'$, identifying the vertices $(2,n), (2,n+1)$ of $G'$ with the respective vertices $(1,1)$ and $(1,2^{n+1} + 2^n + 2)$ of $G_n$, and by extending the horizontal rays in $G_n$ as indicated in \cref{fig:THEGraph}. In particular, the extended horizontal rays of each $G_n$ are still disjoint and their initial vertices are precisely $U_n := \{(i/2^j, j) \mid j \in \{n,n+1\}, 2^j \leq i \leq 2^{j+1}\}$.
To get a graph which is not only locally finite but also planar, we subdivide the edges between $(i/2^{n}, n)$ and $(i/2^{n}, n+1)$ with $i > 2^{n}$ in~$G'$ to obtain the extended rays. 
For later use, we refer to the vertex set $\{(i,j) \mid 1 \leq j \leq 2^{n+1} + 2^n + 2\}$ in $G_n$ as the \defn{$i$-th column of $G_n$} (indicated in pink in \cref{fig:THEGraph} is the first column of $G_2$) and also set
\[
\defnm{S_n} := \{(i/2^{n+1}, n+1) \mid 2^{n+1} < i \leq 2^{n+2}\} \cup \{(1,j) \mid 0 \leq j \leq n+1\}
\]
for all $n \in \N$ (indicated in purple in \cref{fig:CounterexUnrootedLinked2}.)
This completes the construction.
\end{construction}

In the remainder of this section we prove that the graph $G$ from \cref{const:THEGraph} is as desired for \cref{main:NoLeanTD,main:NoUnrootedLinkedTD}. 
For this, we first show two auxiliary lemmas. The first says that $G$ does not admit a \td\ which
`efficiently distinguishes' all ends of $G$. 
Recall that in a \td\ $(T, \cV)$ of $G$ every edge $e = t_0t_1$ of $T$ induces a separation as follows: For $i = 0,1$ write $T_i$ for the component of $T-e$ that contains $t_i$. Then $\{\bigcup_{s \in T_0} V_s, \bigcup_{s \in T_1} V_s\}$ is a separation of $G$ \cite{bibel}*{Lemma~12.3.1}. 
A separation $\{A,B\}$ of $G$ \defn{efficiently distinguishes} two ends $\eps, \eps'$ of $G$ if $\eps$ and $\eps'$ live in components of $G - (A \cap B)$ on different sides of $\{A,B\}$, and there is no separation $\{C,D\}$ of $G$ of smaller order $|C \cap D|$ with this property.
A \td\ $(T, \cV)$ \defn{distinguishes} two ends $\eps, \eps'$ of $G$ \defn{efficiently} if some edge $e$ of $T$ induces a separation which efficiently distinguishes $\eps$ and $\eps'$. 

\begin{lemma} \label{lem:NoTreeDecompEffDistAllEnds}
    Let $(T, \cV)$ be a \td\ of the graph from \cref{const:THEGraph}. Then there exist $n \in \N$ such that
    $(T, \cV)$ does not efficiently distinguish $\eps_n$ and $\eps$.
\end{lemma}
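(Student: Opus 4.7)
The plan is first to determine the minimum order of a separation of $G$ that distinguishes $\eps_n$ from $\eps$: I claim it equals $|U_n|=2^{n+1}+2^n+2$. The upper bound is witnessed by $U_n$ itself, since removing it disconnects $V(G_n)\setminus U_n$ (which contains tails of every $\eps_n$-ray) from the rest of $G$ (which contains tails of every $\eps$-ray). For the lower bound I exhibit $|U_n|$ pairwise disjoint paths, each obtained by concatenating a tail of a horizontal ray $R_i$ of $G_n$ with the corresponding extension to a vertex $v_i\in U_n$ and then with a vertical ray in $G'$ starting at $v_i$ and going upward. Any $(\eps_n,\eps)$-separator must meet every such double ray, which forces the lower bound.

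Next, suppose for contradiction that $(T,\cV)$ efficiently distinguishes $\eps_n$ from $\eps$ for every $n\in\N_{\geq 1}$, and let $e_n\in E(T)$ be the witnessing edge with induced separation $\{A_n,B_n\}$ of order $|U_n|$, where $\eps_n$ lives in $A_n$ and $\eps$ in $B_n$. The key observation is that for every $n<m$ we must have $\eps_m\in B_n$: otherwise $V_{e_n}$ of order $|U_n|<|U_m|$ would be an $(\eps_m,\eps)$-separator, contradicting the previous paragraph. Combined with the pairwise nestedness of the separations induced by edges of a tree-decomposition, this implies that every $e_m$ lies in the component $T_n^+$ of $T-e_n$ containing $\eps$'s side whenever $n<m$, giving a rigid fan-like arrangement of the $e_n$ around $\eps$'s position in $T$.

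The remaining challenge---which I expect to be the main obstacle---is to turn this structural picture into a concrete contradiction. The tool will be the Menger-theoretic structure of the separators: by max-flow--min-cut each vertex of $V_{e_n}$ must lie on one of the $|U_n|$ canonical disjoint double rays from the first paragraph, which confines $V_{e_n}$ essentially to the vicinity of $U_n$ or to a vertical cross-section inside $G_n$. A natural closing step is then to locate a vertex $v_0\in V(G')$ far below all the $G_n$ (for instance $v_0:=(0,0)$) and argue $v_0\in B_n\setminus V_{e_n}$ for every $n$, so that every bag containing $v_0$ would have to sit in the descending intersection of all $T_n^+$. The actual contradiction should then emerge from combining this invariant bag with the unbounded growth $|V_{e_n}|\to\infty$ and the essentially disjoint locations of the separators in $G$: the forced intersection patterns of the bags along the tree-paths from this invariant bag out to the various $e_n$ will, after a careful combinatorial analysis, be incompatible with the vertex-to-subtree axiom of a tree-decomposition for all sufficiently large $n$ simultaneously.
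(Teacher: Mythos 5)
There are two genuine problems with your proposal. First, your computation of the minimum order of an $\eps$--$\eps_n$ separation is wrong: it is $|S_n| = 2^{n+1}+n+2$, witnessed by $S_n = \{(i/2^{n+1},n+1) \mid 2^{n+1} < i \le 2^{n+2}\} \cup \{(1,j) \mid 0 \le j \le n+1\}$, and not $|U_n| = 2^{n+1}+2^n+2$. Your claimed family of $|U_n|$ pairwise disjoint double rays does not exist: the vertices of $U_n$ at levels $n$ and $n+1$ share vertical lines of $G'$ (for instance $(i/2^n,n)$ and $(i/2^n,n+1)=(2i/2^{n+1},n+1)$ both lie in $U_n$), and every path from a level-$n$ vertex of $U_n$ towards $\eps$ must pass either through the right half of level $n+1$ --- which is exactly the level-$(n+1)$ part of $U_n$ --- or through the spine vertices $(1,j)$. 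Since $S_n$ is a strictly smaller separator, Menger caps the number of disjoint $\eps$--$\eps_n$ double rays at $|S_n|$. This error is repairable (the orders $|S_n|$ are still strictly increasing in $n$, which is all your nesting argument needs), but it must be fixed, because the confinement of the separator to the canonical double rays is precisely what your endgame would have to exploit.

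Second, and more seriously, the contradiction itself is missing, and the route you sketch will not produce one. Knowing that every bag containing $v_0=(0,0)$ lies in $\bigcap_n T_n^+$ is not contradictory: the edges $e_n$ could all sit on a single ray of $T$ emanating from the subtree of bags containing $v_0$ and march off to infinity, and neither the ``fan-like arrangement'' nor $|V_{e_n}|\to\infty$ rules this out. What is needed is a \emph{second} anchor vertex trapped on the $\eps_n$-side of \emph{every} minimum separator. The paper uses $(2,0)$: since every vertex of a minimum separator $V_{e_n}$ lies on one of the $|S_n|$ canonical double rays, $V_{e_n}$ avoids both the $\eps$-ray $(0,0)(0,1)\cdots$ and the vertex $(2,0)$; and because $(2,0)$ is joined to $\eps_n$ by a ray disjoint from all those double rays, $(2,0)$ lies on the $\eps_n$-side while $(0,0)$ lies on the $\eps$-side. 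Hence every $e_n$ lies on the finite path of $T$ between a bag containing $(0,0)$ and a bag containing $(2,0)$, and since the $e_n$ are pairwise distinct (their adhesion sets have pairwise distinct sizes $|S_n|$), infinitely many distinct edges would lie on a finite path --- that is the contradiction. Without this second anchor your argument does not close.
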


\begin{figure}[ht]
    \centering
    \pdfOrNot{\includegraphics[page=2]{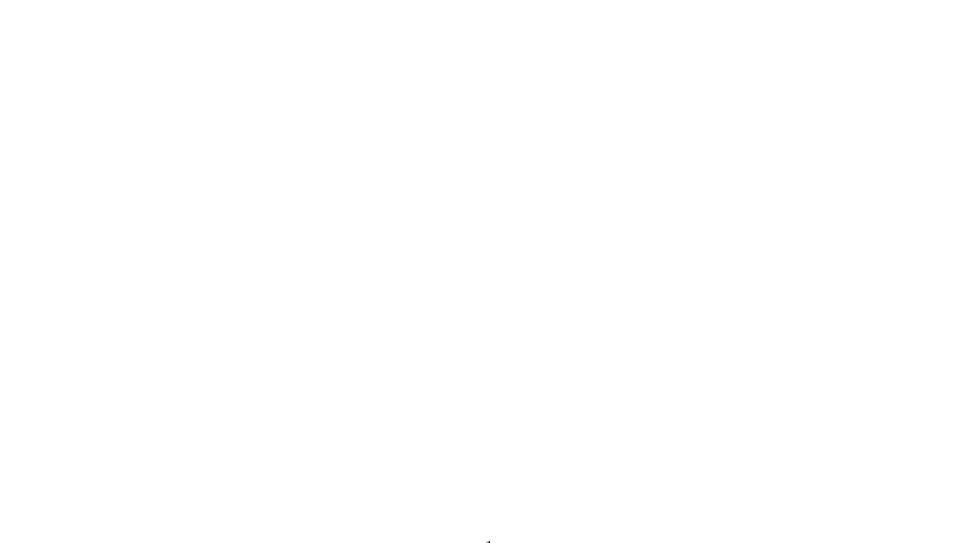}}{\includesvg[width=1.03\columnwidth]{svg/planarexample_rays.svg}}
    \caption{Depicted is the situation in the proof of \cref{lem:NoTreeDecompEffDistAllEnds} for $n = 2$.}
    \label{fig:CounterexUnrootedLinked2}
\end{figure}

\begin{proof}
    Suppose towards a contradiction that $G$ admits a \td\ $(T, \cV)$ such that, for every end $\eps_n$ of $G$, there exists an edge $f_n$ such that the separation induced by $f_n$ distinguishes $\eps_n$ and $\eps$ efficiently. Then $V_{f_n}$ has size at most $2^{n+1}+n+2$ as witnessed by $S_n$ (indicated in purple in \cref{fig:CounterexUnrootedLinked2}). By the definition of~$G'$, there are in fact $|S_n|$ disjoint $\eps$--$\eps_n$ double rays $R_i^n$ in~$G$ (indicated in orange in \cref{fig:CounterexUnrootedLinked2}), so every sizewise-minimal $\eps$--$\eps_n$ separator, and in particular~$V_{f_n}$, has to meet each $R_i^n$ precisely once. In particular, $|V_{f_n}| = |S_n|$ and hence $f_n \neq f_m$ for all $n \neq m \in \N$. Moreover, $V_{f_n}$ avoids the $\eps$-ray $R = (0,0) (0,1) \dots$ (indicated in blue in \cref{fig:CounterexUnrootedLinked2}) and the vertex $(2,0)$, as they are both disjoint from all the $\eps$--$\eps_n$ double rays $R_i^n$. Thus, $G-V_{f_n}$ has a component $C_n$ that contains $R$, and a component $C'_n$ that contains $(2,0)$. In particular, $\eps$ lives in $C_n$. Since there is an $\eps_n$-ray (indicated in green in \cref{fig:CounterexUnrootedLinked2}) which starts in $(2,0)$ and is disjoint from the $R_i^n$, $\eps_n$ lives in $C'_n$. As~$V_{f_n}$ separates $\eps$ and $\eps_n$, we have $C_n \neq C'_n$. Thus, $(0,0)$ and $(2,0)$ lie on different sides of the separation induced by $f_n$. 
    
    Let $V_t$ and $V_s$ be some bags of $(T, \cV)$ which contain $(0,0)$ and $(2, 0)$, respectively. Since the separations induced by every $f_n$ separate $(0,0)$ and $(2,0)$, all the infinitely many distinct edges $f_n$ lie on the finite path $tTs$, which is a contradiction.
\end{proof}

The next lemma essentially says that every \td\ of~$G$ that displays all ends of~$G$ and their combined degrees cannot be strongly linked.

\begin{lemma} \label{lem:NoStronglyLinkedTDDisplayingEndDegrees}
    Let $(T, \cV)$ be a rooted \td\ of the graph $G$ from \cref{const:THEGraph}. Suppose that every end $\omega$ of $G$ gives rise to a rooted ray $R_\omega$ in $T$ with $\liminf_{e \in R_\omega} V_e = \emptyset$ and $\liminf_{e \in R_\omega} |V_e| = \deg(\omega)$. Then $(T, \cV)$ is not strongly linked.
\end{lemma}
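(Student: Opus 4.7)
The plan is to derive a contradiction from the assumption that $(T, \cV)$ is strongly linked by invoking \cref{lem:NoTreeDecompEffDistAllEnds}. By that lemma, pick $n \in \N$ such that $(T, \cV)$ does not efficiently distinguish $\eps$ from $\eps_n$. The proof of \cref{lem:NoTreeDecompEffDistAllEnds} exhibits both an $\eps$--$\eps_n$ separator of size $|S_n| = 2^{n+1}+n+2$ and $|S_n|$ pairwise disjoint $\eps$--$\eps_n$ double rays, and so $|S_n|$ is the minimum size of such a separator. Consequently every edge of $T$ whose induced separation distinguishes $\eps$ from $\eps_n$ has adhesion of size at least $|S_n|+1$.

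The ends $\eps$ and $\eps_n$ give rise to rooted rays $R_\eps$ and $R_{\eps_n}$ in $T$; using the hypothesis $\liminf_{e \in R_\omega} V_e = \emptyset$ one checks that different ends give rise to different rays, so $R_\eps$ and $R_{\eps_n}$ share a maximal common initial segment from the root to some node $r_k$ and then diverge. Every edge of $R_\eps$ or $R_{\eps_n}$ past $r_k$ induces a separation placing $\eps$ and $\eps_n$ on opposite sides, and therefore has adhesion of size at least $|S_n|+1$. Choosing $s$ on $R_\eps$ strictly past $r_k$ and $t$ on $R_{\eps_n}$ strictly past $r_k$, every edge on the tree-path $sTt$ is of this form, so $\min_{e \in E(sTt)} |V_e| \geq |S_n|+1$; strong linkedness would then yield at least $|S_n|+1$ pairwise disjoint $V_s$--$V_t$ paths in $G$.

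The contradiction will come from showing that $G$ in fact admits at most $|S_n|$ such disjoint paths, provided $s$ and $t$ are chosen far enough along their rays. The idea is to extend any family $P_1, \ldots, P_N$ of pairwise disjoint $V_s$--$V_t$ paths to $N$ pairwise disjoint $\eps$--$\eps_n$ double rays in $G$: for each $P_i$ with $V_s$-endpoint $v_s^i$ and $V_t$-endpoint $v_t^i$, attach an $\eps$-ray at $v_s^i$ and an $\eps_n$-ray at $v_t^i$, all chosen pairwise disjoint and disjoint from the paths apart from the designated endpoints. Since $|S_n|$ is also the maximum number of pairwise disjoint $\eps$--$\eps_n$ double rays in $G$ (by the Menger-type bound already used in \cref{lem:NoTreeDecompEffDistAllEnds}), this forces $N \leq |S_n|$ and contradicts $N \geq |S_n|+1$.

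The main obstacle is the extension step, namely simultaneously continuing the $V_s$-endpoints by pairwise disjoint $\eps$-rays avoiding $P_1 \cup \cdots \cup P_N$, and the $V_t$-endpoints by pairwise disjoint $\eps_n$-rays. Here the plan is to combine $\deg(\eps) = \aleph_0$ and $\deg(\eps_n) = 2^{n+1}+2^n+2 \geq |S_n|+1$ with the hypothesis $\liminf_{e \in R_\omega} V_e = \emptyset$: pushing $s$ and $t$ sufficiently far along their rays, the bags $V_s$ and $V_t$ lie effectively on the $\eps$- and $\eps_n$-side of suitable adhesion separators, after which a Menger-style argument (in the spirit of the Aharoni--Berger theorem for infinite graphs) provides the required disjoint ray extensions. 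This is the most delicate part of the argument, where the full strength of the $\liminf$ hypotheses enters.
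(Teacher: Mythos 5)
Your overall logic is inverted relative to what the hypotheses can support, and this creates a genuine gap. The paper's proof runs in the other direction: for \emph{every} $n$ it uses the hypothesis $\liminf_{e \in R_{\eps_n}} |V_e| = \deg(\eps_n)$ to locate an \emph{adhesion set} $V_{e_n}$ of size exactly $\deg(\eps_n)$ with $V_{e_n} \subseteq V(G_n)$ (via tightness of the component above $e_n$, which follows from strong linkedness), and an adhesion set $V_{e_n'}$ disjoint from $G_n$. Between these two \emph{sets} the separator $S_n$ genuinely lies, so there are at most $|S_n|$ disjoint $V_{e_n}$--$V_{e_n'}$ paths, and strong linkedness then \emph{produces a small adhesion set} $f_n$ with $|V_{f_n}| \leq |S_n|$, which must efficiently distinguish $\eps$ and $\eps_n$ — for every $n$ — contradicting \cref{lem:NoTreeDecompEffDistAllEnds}. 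You instead fix the single bad $n$ from \cref{lem:NoTreeDecompEffDistAllEnds}, conclude that all adhesion sets on $sTt$ are large, and then need an \emph{upper} bound of $|S_n|$ on the number of pairwise disjoint $V_s$--$V_t$ paths between two whole \emph{bags}. That bound is not justified by anything in the hypotheses: $\liminf_{e\in R_\omega} V_e = \emptyset$ only forces each fixed vertex to eventually leave the adhesion sets; it does not prevent the bag $V_s$ (for $s$ arbitrarily far along $R_\eps$) from containing many, even infinitely many, fresh vertices of $G_n$, in which case $V_s$ and $V_t$ can be joined by more than $|S_n|$ disjoint paths living entirely inside $G_n$. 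Note also that the lemma does not assume finite parts.

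The proposed repair — extending $N$ disjoint $V_s$--$V_t$ paths to $N$ disjoint $\eps$--$\eps_n$ double rays — is exactly where this fails: the $V_s$-endpoints of the paths are arbitrary bag vertices with no guaranteed connectivity to $\eps$ avoiding the other paths (if several of them sit inside $G_n$, their extensions to $\eps$ must all cross $S_n$ and will collide), and no Menger-type statement (Aharoni--Berger included) provides a linkage from prescribed vertices to an end avoiding prescribed subgraphs. The steps you do carry out correctly (the rays $R_\eps$ and $R_{\eps_n}$ are distinct — though the clean reason is that $\liminf |V_e|$ equals the two different degrees, not $\liminf V_e = \emptyset$ — and every edge of $sTt$ past the divergence point induces a separation distinguishing $\eps$ from $\eps_n$, hence has adhesion $> |S_n|$) are fine, but they lead to a dead end: strong linkedness naturally converts an upper bound on disjoint paths into a small separator, and the argument must be arranged so that the upper bound is available, as the paper does with adhesion sets rather than bags.
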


\begin{proof}
    Suppose towards a contradiction that $(T, \cV)$ is strongly linked. Let $n \in \N$ be arbitrary.
    We write $R_n$ for the rooted ray $R_{\eps_n}$ in $T$, which arose from $\epsilon_n$.
    Since $(T,\cV)$ is a \td , for every ray $R$ of $T$, we have $\bigcap_{e \in R} V(G\strictup e) = \emptyset$, and thus for every finite vertex set $W$ of $G$, all but finitely many edges $e$ of $R$ have the property that $G\strictup e$ avoids $W$. In particular, for all but finitely many edges $e$ of $R_n$, the part $G \strictup e$ avoids the first and second column of $G_n$. 
    As $\liminf_{e \in R_n} |V_e| = \deg(\epsilon_n)$, we may choose such an edge $e_n \in R_n$ so that $|V_{e_n}| = \deg(\eps_n)$ and every later edge $e$ on $R_n$ satisfies $|V_e| \geq |V_{e_n}|$.
    Since $G \strictup e_n$ avoids the first column of $G_n$, the component $D_n$ of $G \strictup e_n$ in which $\eps_n$ lives is contained in $G_n$. 
    
    We claim that $G \strictup e_n$ contains all but finitely many vertices of $G_n$, and $N_G(D_n) = V_{e_n}$. 
    Indeed, the rows of $G_n$ are pairwise disjoint $\eps_n$-rays.
    Since $\eps_n$ lives in $D_n$, the component $D_n$ contains a tail of each row of $G_n$.
    As the rows of $G_n$ cover the entire $G_n$, the part $G \strictup e_n \supseteq D_n$ contains all but finitely many vertices of $G_n$. 
    Since $D_n \subseteq G \strictup e_n$ avoids the first column of $G_n$, its neighbourhood $N_G(D_n) \subseteq V_{e_n}$ contains at least one vertex of each row. 
    Since there are $\deg(\eps_n)$ rows, we have $|N_G(D_n)| \geq \deg(\eps_n) = |V_{e_n}|$, and thus $N_G(D_n) = V_{e_n}$.

    As $N_G(D_n) = V_{e_n}$ and $D_n$ is connected,
    $G \up e_n$ is connected. It follows that $G \up e_n$ is a subgraph of $G_n$ and avoids the first column of~$G_n$, since $G \strictup e_n$ meets $G_n$ and avoids the first and second column of $G_n$.

    Let $H_n$ be the component of $G-S_n$ that contains $G_0$ (where $S_n$ is the set indicated in purple in \cref{fig:CounterexUnrootedLinked2}). We claim that there is an edge~$e'_n$ of the rooted ray~$R_\eps$ in~$T$, which arose from~$\epsilon$, such that $|V_{e'_n}| \geq \deg(\eps_n)$ and such that $G \up e$ avoids $H_n$. 
    For this, we note that the rays $R_m$ are distinct from $R_\epsilon$, since $\liminf_{e \in R_m} |V_e| = \deg(\epsilon_m)$ is finite but $\liminf_{e \in R_\epsilon} |V_e| = \deg(\epsilon)$ is infinite.
    Hence, we may choose a tail $R^n_\eps \subseteq R_\eps$ of $R_\eps$ that avoids all rays $R_m$ for $m \leq n$. 
    Since all $R_m$ and also $R_\eps$ are rooted at the same vertex of $T$, every edge $e$ of $R^n_\eps$ satisfies $(G\up e) \cap (G \strictup e_m) = \emptyset$.
    As $G\strictup e_m$ contains all but finitely many vertices of $G_m$ for $m \leq n$, the set $W_n := V(H_n) \setminus (\bigcup_{m \leq n} V(G\strictup e_n))$ is finite.
    Since $(T,\cV)$ is a \td , for all but finitely many edges $e$ of $R^n_\eps$, the part~$G\strictup e$ avoids $W_n$ and hence $H_n$.
    Note that, for each edge $e$ of $R^n_\eps$, its adhesion set $V_e$ avoids $G \strictup e_m$ for every $m \leq n$.
    Since $\liminf_{e \in R^n_\eps} V_e = \emptyset$ and $W_n$ is finite, for all but finitely many edges $e$ of~$R^n_\eps$, the adhesion set $V_e$ avoids the finite set $W_n$.
    Thus, for all but finitely many edges $e$ of $R_\eps$, the part $G \up e$ avoids $H_n$.
    Finally, since $\liminf_{e \in R_\epsilon} |V_e| = \deg(\epsilon)$ is infinite, we may choose an edge $e'_n$ of $R_\eps$ so that $G\up e'_n$ avoids $H_n$ and $|V_{e'_n}| \geq \deg(\eps_n)$.

    Recall that the adhesion set $V_{e_n}$ is contained in $G_n$ but avoids the first column of $G_n$, and thus $V_{e_n} \subseteq V(H_n)$.
    Since also $V_{e_n'} \cap V(H_n) = \emptyset$, there are at most $2^{n+1} + n + 2$ pairwise disjoint $V_{e_n}$--$V_{e_n'}$ paths in $G$, as witnessed by $S_n$. As $(T, \cV)$ is strongly linked by assumption, there is an edge $f_n$ on the unique $e_n$--$e_n'$ path in~$T$ such that $|V_{f_n}| \leq |S_n|$.
    Moreover, $V_{f_n}$ separates $V_{e_n}$ and $V_{e'_n}$, and thus also $\eps_n$ and $\eps$. 
    By the definition of $G'$, there are in fact $|S_n|$ disjoint $\eps$--$\eps_n$ double rays~$R^n_i$ in~$G$ (indicated in orange in \cref{fig:CounterexUnrootedLinked2}), and hence the separation induced by $f_n$ efficiently distinguishes $\eps$ and $\eps_n$. 
    Since $n \in \N$ was chosen arbitrarily, \cref{lem:NoTreeDecompEffDistAllEnds} yields the desired contradiction.
\end{proof}

\begin{proof}[Proof of \cref{main:NoUnrootedLinkedTD}]
    The graph $G$ from \cref{const:THEGraph} is planar, locally finite and connected. By \cref{lem:LinkedTightCompImpliesDisplayingEndStructure}, every linked, tight, componental rooted \td\ of~$G$ into finite parts displays all the ends of $G$, their dominating vertices and their (combined) degrees. Thus, \cref{lem:NoStronglyLinkedTDDisplayingEndDegrees} ensures that those \td s are not strongly linked. 
\end{proof}

We now turn to our proof of \cref{main:NoLeanTD}, i.e.\ that the graph $G$ from \cref{const:THEGraph} does not admit a lean \td. The proof consists of two steps. First, we show in \cref{lem:NoLeanTDIntoFiniteParts} that $G$ does not admit a lean \td\ into finite parts. Then, we show in \cref{lem:NoLeanTD} that $G$ neither admits a lean \td\ that has an infinite part.

By definition, every lean \td\ is in particular strongly linked. For the first step, it remains to show that every lean \td\ of $G$ into finite parts satisfies the premise of \cref{lem:NoStronglyLinkedTDDisplayingEndDegrees}: it displays all ends of $G$ and their combined degrees, up to the fact that maybe some rays of the decomposition tree do not arise from an end of $G$. 
In fact, the following \cref{lem:LeanTDKindOfDisplayEndsAndDegrees} together with \cref{lem:linkedandliminfareonlydominatingverticesimpliesdisplayenddegrees} implies that this holds true for \emph{all} graphs $H$ and lean \td s of them.

To prove \cref{lem:LeanTDKindOfDisplayEndsAndDegrees}, we first show that even though lean \td s may not be componental, they are not far away from it.

\begin{lemma} \label{lem:LeanTDsAreKindOfComponentalTight}
    Let $(T, \cV)$ be a rooted \td\ of finite adhesion of a graph $H$. Suppose there is an edge $e = st \in E(T)$ with $s <_{T} t$ and a set $Y \subseteq V_e$ such that $(H\up e) - Y$ has at least two components $C_1, C_2$. Suppose further that $V(C_1) \cap V_e \neq \emptyset$ and $V(C_2) \cap (V_t \setminus V_e) \neq \emptyset$. Then $(T, \cV)$ is not lean.
\end{lemma}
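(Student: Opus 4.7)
The plan is to exhibit a specific failure of leanness within the single bag $V_s$, by taking $s$ as both arguments in the definition of leanness. Using the hypothesised witnesses, pick any $q \in C_1 \cap V_e$ and any $p \in C_2 \cap (V_s \setminus V_e)$, and consider the two subsets $Z_1 := V_e$ and $Z_2 := (V_e \setminus \{q\}) \cup \{p\}$ of $V_s$, both of cardinality $|V_e|$ (which is finite by the assumption of finite adhesion). Since the tree-path from $s$ to itself has no edges, leanness demands $|V_e|$ pairwise disjoint $Z_1$-$Z_2$ paths in $H$. The goal is to produce a $Z_1$-$Z_2$ separator of size $|V_e|-1$ in $H$, from which Menger's theorem (for finite sets) yields the desired contradiction.

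The candidate separator is $S := V_e \setminus \{q\}$. Since $Z_1 \setminus S = \{q\}$ and $Z_2 \setminus S = \{p\}$, it suffices to show that $H - S$ contains no $q$-$p$ path. I distinguish two cases: either the path lies entirely in $H \up e$, or it leaves $V(H \up e)$. In the first case, because $q \notin Y$ (as $q \in C_1$ and $Y \cap C_1 = \emptyset$), we have $Y \subseteq V_e \setminus \{q\} = S$, so the path avoids $Y$ and thus lives in $(H \up e) - Y$; but there $q \in C_1$ and $p \in C_2$ belong to different components. In the second case, I invoke the standard \td\ property that there are no $H$-edges between $V(H \up e) \setminus V_e$ and $V(H) \setminus V(H \up e)$, so the path has to cross $V_e$ to both leave and re-enter $V(H \up e)$. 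Since $V_e \setminus S = \{q\}$, each such crossing is forced to occur at the single vertex $q$, which is impossible on a path that already visits $q$ as its starting endpoint.

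Combining these cases, $S$ is indeed a $Z_1$-$Z_2$ separator of size $|V_e|-1$. Menger's theorem then bounds the number of pairwise disjoint $Z_1$-$Z_2$ paths in $H$ by $|V_e|-1 < |V_e|$, violating leanness; hence $(T, \cV)$ is not lean.

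The main obstacle will be carefully handling the second case of the separator argument. One has to spell out, from the \td\ axioms, that any vertex lying both in some bag $V_u$ with $u \in T_e$ and in some bag $V_{u'}$ with $u' \notin T_e$ must lie in $V_e$, which forces every excursion of a path out of $V(H \up e)$ and subsequent return to traverse $V_e$. Once this is established, the fact that the only available crossing vertex in $V_e \setminus S$ is $q$, and that $q$ cannot be revisited along a path, immediately disposes of the case.
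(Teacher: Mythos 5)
Your proof is correct and follows the same basic strategy as the paper's: both exhibit a violation of leanness within the single bag $V_s$ by choosing two equal-sized subsets of $V_s$ that are separated by a set of smaller cardinality built from $Y$ and part of $V_e$, using that any excursion out of $H\up e$ must pass through $V_e$. Your particular choice $Z_1 = V_e$ and $Z_2 = (V_e\setminus\{q\})\cup\{p\}$ is in fact slightly cleaner than the paper's, since it avoids the cardinality-balancing sets $U_i'$ and the attendant case distinction, and it does not even use the hypothesis $C_2\cap V_e\neq\emptyset$.
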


\begin{proof}
    By assumption, we may pick vertices $v_1 \in V(C_1) \cap V_e$ and $v_2 \in V(C_2) \cap (V_t \setminus V_e)$.
    Set $Z_1 := V_e$ and $Z_2 := (V_e\setminus \{v_1\}) \cup \{v_2\}$. By construction, $|Z_1| = |V_e| = |Z_2| =: k$. Since $(T, \cV)$ is lean and $Z_1, Z_2 \subseteq V_t$, there is a family $\cP$ of $k$ disjoint $Z_1$--$Z_2$ paths in $H$. Note that every path in $\cP$ that starts in $Z_1 \cap Z_2 = V_e\setminus \{v_1\}$ is a trivial path. Hence, there is a path $P \in \cP$ that starts in $\{v_1\} = Z_1 \setminus Z_2$ and ends in $\{v_2\} = Z_2 \setminus Z_1$. But $v_1 \in V(C_1)$ is separated from $v_2 \in V(C_2) \setminus V_e \subseteq V(H \strictup e)$ by $Y \cup (V(C_2) \cap V_e) \subseteq V_e \setminus \{v_1\}$. This contradicts that the paths in $\cP$ are pairwise disjoint.
\end{proof}

\begin{lemma} \label{lem:LeanTDKindOfDisplayEndsAndDegrees}
    Let $(T, \cV)$ be a rooted \td\ of a graph $H$ into finite parts which is lean as an unrooted \td. Then every ray in $T$ arises from at most one end of $H$. Moreover, if a ray $R$ in $T$ arises from an end $\eps$ of~$H$, then $\liminf_{e \in R} V_e = \Dom(\eps)$.
\end{lemma}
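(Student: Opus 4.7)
The plan is to derive both parts from \cref{lem:LeanTDsAreKindOfComponentalTight}, which restricts the component structure of $(H \up e) - Y$ in lean \td s.

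For the uniqueness part, I argue by contradiction: suppose distinct ends $\eps_1 \ne \eps_2$ both give rise to $R$. For every edge $e = ts \in R$ with $t < s$, both ends live in $H \strictup e$, say in components $C_i^e := C_H(V_e, \eps_i) \subseteq H \strictup e$ of $H - V_e$, each adjacent to $V_e$ via entry points of $\eps_i$-rays. I first argue that for some $e \in R$ these components differ: if $C_1^e = C_2^e$ for all $e$ on a tail of $R$, then tails of $\eps_1$- and $\eps_2$-rays lie in a common component of $H - V_e$ for each such $e$, and since any finite $\eps_1$--$\eps_2$ separator has to interact with the $V_e$'s in a compatible way (using the finiteness of the bags), one derives $\eps_1 = \eps_2$, contradicting the assumption. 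Once I have such an edge $e \in R$ with $C_1^e \ne C_2^e$ and additionally $V_s \setminus V_e \ne \emptyset$ (such $e$ exist infinitely often, for otherwise $H \strictup e$ would eventually be empty along $R$, contradicting that $\eps_i$-rays live there), I choose $Y \subseteq V_e$ (consisting, say, of the $V_e$-vertices adjacent only to $C_2^e$) so that $(H \up e) - Y$ has two distinct components $C_1, C_2$ extending $C_1^e, C_2^e$, both meeting $V_e \setminus Y$, with $C_2$ meeting some vertex of $V_s \setminus V_e$---contradicting leanness via \cref{lem:LeanTDsAreKindOfComponentalTight}.

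For the second part, the inclusion $\Dom(\eps) \subseteq \liminf_{e \in R} V_e$ is the standard subtree argument: for $v \in \Dom(\eps)$ the subtree $T_v \subseteq T$ of bags containing $v$ must intersect $R$ in an infinite tail, for otherwise one finds an edge $e \in R$ with $v \notin V_e$ and $T_v$ on the root-side of $e$, so that $V_e$ is a finite vertex set not containing $v$ that separates $v$ from $H \strictup e \supseteq \eps$, contradicting domination. For the reverse inclusion, let $v \in \liminf V_e$ and let $X \subseteq V(H) \setminus \{v\}$ be finite; I want to exhibit a $v$--$\eps$ path in $H - X$. Using finiteness of $X$ and of bags together with the fact that any $u \in X \setminus \liminf V_e$ has $T_u \cap R$ finite and hence eventually leaves $H \strictup e$, I pick $e = ts \in R$ with $v \in V_e$, $X \cap H \strictup e = \emptyset$, and $V_s \setminus V_e \ne \emptyset$. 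Set $Y := X \cap V_e$. The $\eps$-ray's tail lies in $H \strictup e \subseteq (H \up e) - Y$, and its component $C_H(V_e, \eps)$ is adjacent to some vertex $u \in V_e \setminus X = V_e \setminus Y$. If $v$ and $u$ lay in different components of $(H \up e) - Y$, both components would meet $V_e \setminus Y$, and---after passing to a further edge of $R$ to secure the $V_s \setminus V_e$-intersection condition---\cref{lem:LeanTDsAreKindOfComponentalTight} would yield a contradiction to leanness. Hence $v$ and the $\eps$-ray tail lie in the same component of $(H \up e) - Y \subseteq H - X$, establishing $v \in \Dom(\eps)$.

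The main obstacle is in part~(1): showing $C_1^e \ne C_2^e$ for some $e \in R$ whenever $\eps_1 \ne \eps_2$, since the bags $V_e$ along $R$ need not contain any fixed finite $\eps_1$--$\eps_2$ separator. Careful bookkeeping of how such a separator's vertices interact with the $V_e$'s, together with possibly iterated applications of \cref{lem:LeanTDsAreKindOfComponentalTight}, should resolve this. Similar care is needed in both parts for arranging the $V_s \setminus V_e$-intersection required by the lemma's hypotheses, which may force passing to a further edge of $R$.
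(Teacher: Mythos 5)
Your overall strategy---reducing both assertions to \cref{lem:LeanTDsAreKindOfComponentalTight}---is the same as the paper's, but the way you set up the hypotheses of that lemma has a genuine gap, which you yourself flag as ``the main obstacle''. In part (1) you try to show that for some $e \in R$ the adhesion set $V_e$ itself separates $\eps_1$ from $\eps_2$ (i.e.\ $C_H(V_e,\eps_1) \neq C_H(V_e,\eps_2)$), and then to find $Y \subseteq V_e$ re-separating these two components inside $H\up e$. Neither step is carried out. The first can in fact be salvaged (if the two components coincided for an edge $e$ with $X \subseteq H\down e$, that common component would lie in $H\strictup e$, hence avoid $X$, and would contain tails of both ends, contradicting that $X$ separates them); but the second step is where the approach really breaks: your candidate $Y$ (the $V_e$-vertices adjacent only to $C_2^e$) need not separate $C_1^e$ from $C_2^e$ in $H\up e$ at all, and in general every vertex of $V_e$ may send edges into both components, so the only separating choice is $Y = V_e$---which destroys the hypothesis $C_i \cap V_e \neq \emptyset$ of \cref{lem:LeanTDsAreKindOfComponentalTight}.

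The paper avoids both problems with one device you are missing: fix a finite set $X$ such that $\eps_1,\eps_2$ live in distinct components $C_1, C_2$ of $H - X$, pick auxiliary vertices $v_i \in C_i$, and choose $e = ts \in R$ so late that $X \cup \{v_1,v_2\} \subseteq H\down e$. Then $Y := X \cap V_e$ automatically separates $C_1 \cap H\up e$ from $C_2 \cap H\up e$ inside $H\up e$ (since $X \cap H\up e = Y$), and each $C_i$ meets $V_e \setminus Y$ because it is connected, avoids $X$, and meets both $H\down e$ (in $v_i$) and $H\strictup e$ (in a ray tail). The same auxiliary-vertex trick is what is missing from your part (2): there you assert without justification that $C_H(V_e,\eps)$ is adjacent to some $u \in V_e \setminus X$, but if $v$ did not dominate $\eps$ this is exactly what could fail, so the argument is close to circular. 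Picking a vertex $v'$ in the component of $H-X$ in which $\eps$ lives with $v' \in H\down e$, and letting connectivity force that component across $V_e \setminus X$, repairs this. (Your forward inclusion $\Dom(\eps) \subseteq \liminf_{e\in R} V_e$ is fine and is indeed the standard subtree argument, which the paper leaves implicit.)
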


\begin{proof}
    For the first assertion, suppose towards a contradiction that there are two distinct ends $\eps_1, \eps_2$ of $H$ that give rise to the same rooted ray $R$ of~$T$. As $\eps_1, \eps_2$ are distinct, there is a finite set $X$ of vertices of $H$ such that $\eps_1, \eps_2$ live in distinct components $C_1, C_2$ of $G-X$.

    Now pick a vertex $v_1 \in V(C_1)$.
    Since $(T,\cV)$ is a \td, $\bigcap_{e \in R} V(H\strictup e) = \emptyset$, and thus all but finitely many edges $e$ of $R$ have the property that $H\down e$ contains the finite set $X \cup \{v_1\}$. As $\epsilon_2$ gives rise to $R$ and lives in $C_2$, there is an edge $e = st$ on $R$ with $s <_T t$ such that $X \cup \{v_1\} \subseteq H\down e$ and $V(C_2) \cap (V_t \setminus V_e)$ is non-empty. Note that $V(C_1) \cap V_e$ is non-empty as $C_1$ is connected and meets both $H\down e$ (in the vertex $v_1$) and $H \strictup e$ (as $\eps_1$ gives rise to $R$ and lives in $C_1$).
    Set $Y := X \cap V_e$ and, for $i=1,2$, let $C_i'$ be a component of $C_i \cap (H \up e - Y)$ which contains a vertex from $V(C_1) \cap V_e$ or $V(C_2) \cap (V_t \setminus V_e)$, respectively. Then $Y, C_1', C_2'$ are as in \cref{lem:LeanTDsAreKindOfComponentalTight}. It follows that $(T, \cV)$ is not lean, which is a contradiction.

    To show the moreover statement, let $\eps$ be an end of $H$ that gives rise to a ray $R$ in $T$. Now suppose towards a contradiction that there is a vertex $w \in \liminf_{e \in R} V_e$ that does not dominate~$\eps$. Then there is a finite set $X \subseteq V(H)$ and distinct components $C_1, C_2$ of $G-X$ such that $w \in V(C_1)$ and $\eps$ lives in $C_2$.

    As above, there is an edge $e = st$ on $R$ with $s <_T t$ such that $H \down e$ contains $X \cup \{w\}$ and $V(C_2) \cap (V_t \setminus V_e)$ is non-empty.
    Since $w$ also lies in $\liminf_{f \in R} V_f$, we have $w \in V_e$, and thus $V(C_1) \cap V_e$ is non-empty. As above we obtain a contradiction by applying \cref{lem:LeanTDsAreKindOfComponentalTight}.
\end{proof}

\begin{lemma} \label{lem:NoLeanTDIntoFiniteParts}
    The graph $G$ from \cref{const:THEGraph} admits no lean \td\ into finite parts.
\end{lemma}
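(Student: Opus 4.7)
Suppose for contradiction that $(T,\cV)$ is a lean \td\ of $G$ into finite parts, rooted at an arbitrary node. The overall strategy is to verify that the hypotheses of \cref{lem:NoStronglyLinkedTDDisplayingEndDegrees} are satisfied and then exploit the fact that \emph{lean} implies \emph{strongly linked} to derive a contradiction.

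First, since $G$ is locally finite and every bag $V_t$ is finite, every ray in $G$ meets every bag only finitely often; hence every end $\omega$ of $G$ gives rise to a (unique) ray $R_\omega$ in $T$, as remarked in \cref{sec:Prelims}. Applying the just-proved \cref{lem:LeanTDKindOfDisplayEndsAndDegrees} to $(T,\cV)$, each such ray arises from no other end of $G$, and we obtain $\liminf_{e \in R_\omega} V_e = \Dom(\omega)$. Because $G$ is locally finite, no vertex can dominate any end, so $\Dom(\omega) = \emptyset$ for every end $\omega$ of $G$. This takes care of the first half of the hypothesis of \cref{lem:NoStronglyLinkedTDDisplayingEndDegrees}.

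Next, I would invoke \cref{lem:linkedandliminfareonlydominatingverticesimpliesdisplayenddegrees} to upgrade this to the statement about sizes: since a lean \td\ is in particular linked, has finite adhesion (all bags being finite), and each $R_\omega$ arises from a single end with $\liminf_{e \in R_\omega} V_e = \emptyset = \Dom(\omega)$, the lemma yields $\liminf_{e \in R_\omega} |V_e| = \Delta(\omega) = \deg(\omega)$ (the combined degree equals the degree because $\Dom(\omega)=\emptyset$). Thus every end $\omega$ of $G$ gives rise to a ray $R_\omega$ with both $\liminf V_e = \emptyset$ and $\liminf |V_e| = \deg(\omega)$, precisely the hypothesis of \cref{lem:NoStronglyLinkedTDDisplayingEndDegrees}. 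Since a lean \td\ is strongly linked, that lemma delivers the desired contradiction.

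The only mildly delicate points are bookkeeping ones: verifying that \emph{lean} does imply both \emph{linked} (in the rooted sense, for any choice of root) and \emph{strongly linked} (just by plugging $\ell$-subsets of $V_s$ and $V_t$ into the definition of leanness, using that the minimum adhesion along $sTt$ is the relevant $\ell$), and that local finiteness together with finite bags is exactly the regime in which the map $\omega \mapsto R_\omega$ is well-defined on all of $\Omega(G)$. Once these are confirmed, \cref{lem:LeanTDKindOfDisplayEndsAndDegrees,lem:linkedandliminfareonlydominatingverticesimpliesdisplayenddegrees,lem:NoStronglyLinkedTDDisplayingEndDegrees} chain together to give the result with no further computation.
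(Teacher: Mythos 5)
Your proposal is correct and follows essentially the same route as the paper: deduce from \cref{lem:LeanTDKindOfDisplayEndsAndDegrees} that every end gives rise to its own ray with $\liminf_{e\in R_\omega}V_e=\Dom(\omega)=\emptyset$ (local finiteness), upgrade to $\liminf_{e\in R_\omega}|V_e|=\Delta(\omega)$ via \cref{lem:linkedandliminfareonlydominatingverticesimpliesdisplayenddegrees}, and conclude with \cref{lem:NoStronglyLinkedTDDisplayingEndDegrees} since lean implies strongly linked. The "bookkeeping" points you flag (finite bags guaranteeing that every end gives rise to a ray, and lean implying strongly linked) are exactly the ones the paper relies on implicitly or states in the surrounding text, so nothing is missing.
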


\begin{proof}
    Suppose towards a contradiction that $G$ admits a lean \td\ $(T, \cV)$ into finite parts.
    It follows from \cref{lem:LeanTDKindOfDisplayEndsAndDegrees} that every end $\omega$ of $G$ gives rise to a ray $R_\omega$ in $T$ which arises from no other end of $G$. Moreover, we have $\liminf_{e \in R_\omega} V_e = \Dom(\omega) = \emptyset$, as locally finite graphs have no dominating vertices. So since $(T, \cV)$ is lean and hence strongly linked, \cref{lem:linkedandliminfareonlydominatingverticesimpliesdisplayenddegrees} implies that $\liminf_{e \in R_\omega} |V_e| = \Delta(\omega)= \deg(\omega)$ (because $G$ is locally finite). Thus, by \cref{lem:NoStronglyLinkedTDDisplayingEndDegrees}, $(T,\cV)$ is not strongly linked, which is a contradiction.
\end{proof}

For the proof of \cref{main:NoLeanTD} it remains to show that the graph from \cref{const:THEGraph} neither admits a lean \td\ with possibly infinite parts.

\begin{lemma} \label{lem:NoLeanTD}
    The graph $G$ from \cref{const:THEGraph} admits no lean \td.
\end{lemma}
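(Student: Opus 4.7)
The plan is to show that no lean \td\ of $G$ can have an infinite bag, thereby reducing to \cref{lem:NoLeanTDIntoFiniteParts}. Suppose for contradiction that $(T, \cV)$ is a lean \td\ of $G$ with some infinite bag $V_t$. Applying the leanness condition with $s = t$: since the path $sTt$ contains no edges, the ``small-adhesion'' alternative in the definition of lean is vacuous, and so every pair of finite subsets $Z_1, Z_2 \subseteq V_t$ with $|Z_1| = |Z_2| = k$ must admit $k$ pairwise disjoint $Z_1$--$Z_2$ paths in $G$.

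Since $G$ is locally finite and $V_t$ is infinite, the star-comb lemma produces a comb in $G$ whose infinitely many teeth lie in $V_t$ and whose spine $R$ is a ray in some end $\omega$ of $G$. If $\omega = \eps_n$ for some $n$, set $G_n^* := G_n \cup (\text{extended rays})$; the interface $U_n$ between $V(G_n^*)$ and the rest of $G$ has size $m := \deg(\eps_n) = 2^{n+1} + 2^n + 2$. Were $V_t \cap V(G_n^*)$ finite, infinitely many disjoint teeth-to-$R$ paths from outside $V(G_n^*)$ would all have to cross $U_n$, which is impossible; so $V_t \cap V(G_n^*)$ is infinite, and by pigeonhole over the $m$ rows of $G_n^*$ this intersection meets infinitely many columns. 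Picking $2k$ vertices of $V_t \cap V(G_n^*)$ in strictly increasing columns with $k := 2m$ and splitting them as $Z_1, Z_2$ each of size $k$: every $Z_1$--$Z_2$ path in $G$ either stays in $V(G_n^*)$ (bounded by $m$ via the vertical min-cut between the two families of columns) or uses at least two distinct vertices of $U_n$ (bounded by $\lfloor m/2 \rfloor$), giving at most $3m/2 < k$ disjoint paths and contradicting leanness.

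If $\omega = \eps$, we first try to reduce to the previous case. If $V_t \cap V(G_n^*)$ is infinite for some $n$, then applying the star-comb lemma to that smaller infinite set gives a sub-comb whose spine either lies in $\eps_n$ (reducing to the case $\omega = \eps_n$) or else produces infinitely many disjoint teeth-to-spine paths from inside $V(G_n^*)$ to outside, which would again all cross the finite set $U_n$, a contradiction. So we may assume $V_t \cap V(G_n^*)$ is finite for every $n$, whence $V_t$ accumulates in the ``pure trunk'' $W := V(G) \setminus \bigcup_n V(G_n^*) = V(G') \setminus \bigcup_n U_n$.

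I expect this last sub-case to be the main obstacle. Since, for instance, the leftmost ray $L := \{(0,j) : j \geq 0\} \subseteq W$ is already $k$-linked in $G$ for every $k$ (one can route $k$ disjoint $Z_1$--$Z_2$ paths for any $Z_1, Z_2 \subseteq L$ of size $k$ by using detours through the off-$L$ vertical rays at columns $1/2^i$ together with the horizontal paths in each row), no contradiction can be derived from leanness within $V_t$ alone; the argument must couple leanness within $V_t$ with the leanness condition across $V_t$ and its neighbouring bags. A natural approach would be to invoke \cref{lem:LeanTDsAreKindOfComponentalTight} (which only needs finite adhesion) and, by a careful analysis of which ends of $G$ still rise to rays of $T$ despite the infinite bag $V_t$, extract enough of the end-displaying consequences of \cref{lem:LeanTDKindOfDisplayEndsAndDegrees} to apply \cref{lem:NoStronglyLinkedTDDisplayingEndDegrees} (using $\liminf |V_e| = \deg(\omega)$ on the rising rays) to reach the desired contradiction. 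Alternatively, one could attempt to refine $(T, \cV)$ into a lean \td\ with only finite parts by splitting $V_t$ along the comb structure, thereby contradicting \cref{lem:NoLeanTDIntoFiniteParts} directly.
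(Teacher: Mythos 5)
There is a genuine gap, and you have in fact located it yourself: the sub-case where the infinite bag $V_t$ accumulates towards the end $\eps$ of the ``trunk'' $G'$ is left unresolved, and the suggestions you offer for closing it do not work as stated. Your correct observation that the left ray $L$ is $k$-linked for every $k$ shows that leanness \emph{within} $V_t$ alone cannot finish the job; but the routes you then sketch run into the following obstruction: once an infinite bag accumulates towards $\eps$, the comb with teeth in $V_t$ can be rerouted into a single $\eps$-ray meeting $V_t$ infinitely often, so $\eps$ does \emph{not} give rise to a ray of $T$. Hence the hypotheses of \cref{lem:NoStronglyLinkedTDDisplayingEndDegrees} (which needs \emph{every} end, including $\eps$, to give rise to a ray with the stated $\liminf$ behaviour) and of \cref{lem:linkedandliminfareonlydominatingverticesimpliesdisplayenddegrees} (which needs finite adhesion) cannot be recovered, and there is no evident way to split $V_t$ ``along the comb'' without destroying leanness.

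The paper closes exactly this case by applying leanness \emph{between two distinct bags} rather than trying to restore the finite-parts machinery. From the finite-bag analysis one gets, for each $n$, a node $t_n$ on the ray $R_{\eps_n}$ whose bag meets every row of $G_n$, hence contains a set $X_n$ of $2^{n+1}+2^n+2$ vertices of $G_n$; the infinite bag $V_s$ meets each $G_n$ only finitely, so it contains a set $Y_n$ of the same size inside $G'_n$. Since at most $|S_n| = 2^{n+1}+n+2$ disjoint $X_n$--$Y_n$ paths exist, leanness applied to $Z_s = X_n \subseteq V_{t_n}$ and $Z_t = Y_n \subseteq V_s$ forces an edge $f_n$ on $t_nTs$ with $|V_{f_n}| \le |S_n|$, and one checks (using the comb to locate $\eps$, and the linkage of $X_n$ into $\eps_n$) that the separation induced by $f_n$ efficiently distinguishes $\eps_n$ from $\eps$. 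Doing this for every $n$ contradicts \cref{lem:NoTreeDecompEffDistAllEnds}, whose proof shows that infinitely many such distinct edges $f_n$ would all have to lie on one finite path of $T$. So the missing idea is precisely this global, cross-bag use of leanness combined with the impossibility of efficiently distinguishing all the ends $\eps_n$ from $\eps$; without it your argument does not reach a contradiction in the main case.
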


\begin{proof}
    Suppose for a contradiction that $G$ has a lean \td\ $(T, \cV)$. Let $T$ be rooted in an arbitrary node.
    We first show that every end $\eps_n$ gives rise to a ray $R_n$ in $T$, that is every bag of $(T, \cV)$ meets every $\eps_n$-ray at most finitely often. For this it suffices to show that every bag meets $G_n$ at most finitely often.
    Let $n \in \N$ be given, and suppose there is a bag $V_t$ that contains infinitely many vertices of $G_n$. Then let $Z_1 \subseteq V_t$ be a set of $2^{n+1}+2^n+3$ vertices of $G_n$, and let $i \in \N$ such that $Z_1$ is contained in the first $i$ columns of $G_n$. Now let $Z_2 \subseteq V_t$ be a set of $2^{n+1}+2^n+3$ vertices of $G_n$ that avoids the first $i$ columns of $G_n$. Since the $i$-th column of $G_n$ has size $2^{n+1}+2^n+2$ and separates $Z_1$ and $Z_2$, this contradicts that $(T, \cV)$ is lean.

    Hence, each $\eps_n$-ray meets every bag of $(T, \cV)$ at most finitely often, which implies that $\eps_n$ gives rise to a ray $R_n$ in $T$. In particular, then, there exists a node $t$ of $R_n$ such that $V_t$ meets every row of $G_n$: If not, then every bag $V_t$ of $R_n$ avoids some row of $G_n$. In fact, since $\eps_n$ gives rise to $R_n$ and thus  every $V_t$ meets every row that meets $V_s$ for some node $s <_T t$ of $R_n$, all $V_t$ with $t \in R_n$ avoid the same row of $G_n$. Since $\eps_n$ gives rise to $R_n$, it follows that this row is contained in $G\strictup e$ for all edges $e$ of $R_n$, which contradicts that $(T, \cV)$ is a \td.
    Hence, there is a node $t_n$ of $R_n$ such that $V_{t_n}$ meets every row of $G_n$.
    Let $X_n \subseteq V_{t_n}$ be a set consisting of precisely one vertex of each row of $G_n$. In particular, $X_n$ has size $2^{n+1}+2^n+2$ and is linked to~$\epsilon_n$.

    Since $G$ admits no lean \td\ into finite parts by \cref{lem:NoLeanTDIntoFiniteParts}, $(T, \cV)$ contains an infinite bag $V_s$. As shown above, $V_s$ contains at most finitely many vertices of each $G_n$. 
    Hence, for every $n \in \N$, the bag $V_s$ contains infinitely many vertices of $G'_n := G'[\{(i,j) \in V(G') : j > n\}] \cup \bigcup_{m > n} G_m$. Let $Y_n \subseteq V_s$ consist of $2^{n+1}+2^n+2$ vertices of $G'_n$.  Since $G$ is locally finite and connected, it follows from the Star-Comb Lemma \cite{bibel}*{Lemma~8.2.2} that there is a comb $C$ in $G$ with teeth in $V_s$. 
    Recall that the comb $C$ is the union of a ray $R$, its spine, and infinitely many disjoint (possibly trivial) paths with precisely their first vertex in $R$ and their last vertex in $V_s$. 
    As $V_s \cap V(G_n)$ is finite for all $n \in \N$, the spine $R$ of $C$ is an $\eps$-ray.

    Since $X_n \subseteq V(G_n)$ and $Y_n \subseteq V(G'_n)$, there are at most $2^{n+1}+n+2$ pairwise disjoint $X_n$--$Y_n$ paths in $G$, as witnessed by $S_n$ (indicated in purple in \cref{fig:CounterexUnrootedLinked2}). 
    As $(T, \cV)$ is lean by assumption, there is an edge $f_n$ on the unique $t_n$--$s$ path in $T$ such that $|V_{f_n}| \leq |S_n|$. 
    Moreover, $V_{f_n}$ separates~$V_{t_n}$ and~$V_{s}$. We claim that~$V_{f_n}$ also separates~$\eps_n$ and~$\eps$.
    Indeed, since~$C$ has teeth in~$V_s$, the component $D$ of $G-{V_{f_n}}$ in which $\eps$ lives meets $V_s \setminus V_{f_n}$. 
    Because $X_n$ is linked to $\eps_n$ and $|V_{f_n}| \leq |S_n| < |X_n|$,
    the component $D_n$ of $G - V_{f_n}$ in which $\eps_n$ lives meets $X_n \setminus V_{f_n} \subseteq V_{t_n} \setminus V_{f_n}$.
    Since $V_{f_n}$ separates $V_{t_n}$ and $V_s$, the components $D$ and $D_n$ are distinct, i.e.\ $V_{f_n}$ separates $\eps$ and $\eps_n$.
    By the definition of $G$, there are $|S_n|$ disjoint $\eps$--$\eps_n$ double rays $R_i^n$ in $G$, and hence the separation induced by $f_n$ distinguishes $\eps$ and $\eps_n$ efficiently. 
    Now \cref{lem:NoTreeDecompEffDistAllEnds} yields the desired contradiction.
\end{proof}

\begin{proof}[Proof of \cref{main:NoLeanTD}]
    The graph $G$ from \cref{const:THEGraph} is planar, locally finite and connected. The assertion thus follows from \cref{lem:NoLeanTD}.
\end{proof}

\section{\texorpdfstring{\cref{main:NoRootedlyLeanTreeDecomp}}{Example 3} -- negative answer to question \ref{item:RootedlyLeanTD}} \label{sec:NoRootedlyLeanTD}

In this section we construct \cref{main:NoRootedlyLeanTreeDecomp}, which we restate here for convenience:

\begin{customex}{\cref{main:NoRootedlyLeanTreeDecomp}}
    There is a countable graph $G$ such that every \td\ of $G$ into finite parts has a bag $V_t$ which violates the property of being lean for $s = t$.
\end{customex}

The graph in this example is essentially the same as \cite{carmesin2019displayingtopends}*{Example 3.7}.

\begin{proof}
    Let $G'$ be the $\N \times \{0,1,2\}$ grid, that is, $V(G') = \{(i,j) \mid i \in \N,\; j \in \{0,1,2\}\}$, and there is an edge in $G'$ between $(i,j)$ and $(i',j')$ whenever $|i-i'| + |j-j'| = 1$. For every $n \in \N_{\geq 1}$, let $U_n := \{(i,1) \mid i \leq n\} \cup \{(n-1, 0), (n, 0)\}$ (see \cref{fig:NoRootedlyLeanTD}). 
    Now the graph $G$ is obtained from $G'$ by making the sets $U_n$ complete. We claim that $G$ is as desired.

    \begin{figure}[ht]
        \centering
        \pdfOrNot{\includegraphics[page=2]{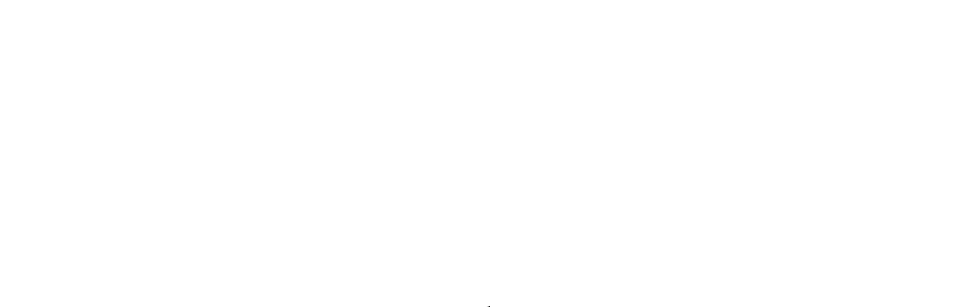}}{\includesvg[width=1.03\columnwidth]{svg/norootedlyleantdexample.svg}}
        \caption{Depicted is the graph $G'$ and the set $U_4$ in purple from \cref{main:NoRootedlyLeanTreeDecomp}. Indicated in blue is the ray $(0,2) (1,2) \dots$ that contains the vertex $w$ and indicated in green is the path $(0,0) (1,0) (2,0) (3,0)$ that contains the vertex $u$ in the case $m = 4$.}
        \label{fig:NoRootedlyLeanTD}
    \end{figure}

    Let $(T, \cV)$ be a rooted \td\ of $G$ into finite parts.
    Let $R \subseteq T$ be the rooted ray arising from the unique end $\eps$ of $G$.
    Since $(T, \cV)$ is a \td, we have $\bigcap_{e \in R} V(G\strictup e) = \emptyset$, and hence there is an edge $e$ of $R$ such that $(0,0), (0,1), (0,2) \in V(G\down e)$. As $\eps$ gives rise to $R$, the ray $(0,0) (1,0) (2,0) \dots$ through the bottom row has a tail in $G \strictup e$; i.e.\ there is $n \in \N$ such that $(n',0) \in V(G\strictup e)$ for all $n' \geq n$. Since $G[U_{n'}]$ is complete, it follows that $U_{n'} \subseteq V(G\up e)$ for all $n' \geq n$. 
    
    Next, observe that there is an edge $e' > e$ of $R$ such that $U_n \subseteq V(G\down e')$.
    Now let $f = t_1t_2$ with $t_1 <_T t_2$ be the $\leq_T$-minimal edge of $R$ such that there exists $m \in \N_{\geq n}$ with $U_m \subseteq V(G\down f)$. Note that $e'$ is a candidate for $f$, and observe that $e < f$.
    Let $m$ be maximal such that $U_m \subseteq V(G\down f)$; in particular, $m \geq n$. 
    To see that this maximum exists, note that the ray $(0,0) (1,0) (2,0) \dots$ has a tail in $G\strictup f$, and hence there is $i \in \N$ such that $(j,0) \in V(G\strictup f)$ for all $j \geq i$. Thus, $m \leq i$.
    
    By the choice of $f$ and $m$, we have $U_{m+1} \subseteq V(G\up f)$, as $U_{m+1}$ is complete. Hence, as $V_f$ separates $G\strictup f$ and $G\strictdown f$, it follows that $U_m \cap U_{m+1} \subseteq V_f \subseteq V_{t_1}$. Moreover, as $(0,2) \in V(G\down e) \subseteq V(G\down f)$ by the choice of $e$, and because $\eps$ gives rise to $R$, the ray $(0,2) (1,2) (2,2) \dots$ meets $V_f \subseteq V_{t_1}$ in a vertex~$w$. Similarly, the path $(0,0) (1,0) \dots (m-1, 0)$ meets $V_{f'} \subseteq V_{t_1}$ in a vertex $u$ where~$f'$ is the unique down-edge at $t_1$. Indeed, we have $(0,0) \in G\down f'$ by the choice of $e < f$ and $(m-1,0) \in G\up f'$: if $(m-1, 0)$ was contained in $G\strictdown f'$, then $U_m \subseteq V(G\down f')$ since $U_m$ is complete in $G$, which contradicts the $\leq_T$-minimal choice of $f$.

    We now let $s:= t_1 =: t$ and define $Z_1 := (U_m \cap U_{m+1}) \cup \{w\}$ and $Z_2 := (U_m \cap U_{m+1}) \cup \{u\}$. By construction, we have $|Z_1| = |Z_2| = m + 3$ and $Z_1, Z_2 \subseteq V_{t_1}$. 
    Hence, $(T, \cV)$ violates the property of being lean for $s = t_1 = t$ since $U_m \cap U_{m+1}$ separates $w$ and $u$ and hence witnesses that $G$ contains at most $m + 2$ disjoint $Z_1$--$Z_2$ paths.
\end{proof}

\section{Upwards disjointness of adhesion sets} \label{sec:IncDisj}

As mentioned in the introduction, we show in \cite{LinkedTDInfGraphs}*{Theorem~1'} a more detailed version of \cref{thm:LinkedTightCompTreeDecomp} which, among others, yields that the adhesion sets of the \td\ intersect `not more than necessary': 

\begin{theorem} \label{thm:LinkedTightCompTreeDecompIncDisj}
    Every graph~$G$ of finite tree-width admits a rooted \td\ $(T, \cV)$ into finite parts that is linked, tight and componental.
    Moreover, we may assume that
    \begin{enumerate}[label=\rm{(\arabic*)}]
        \item \label{itm:MainTechnical:IncDisj} for every $e <_T e' \in E(T)$ with $|V_e| \leq |V_{e'}|$, each vertex of $V_e \cap V_{e'}$ either dominates some end of~$G$ that lives in~$G \up e'$, or is contained in a critical vertex\footnote{A set $X$ of vertices of $G$ is \defn{critical} if infinitely many components of $G-X$ have neighbourhood $X$ in $G$.} set of $G$ that is included in $G\up e'$.
    \end{enumerate}
\end{theorem}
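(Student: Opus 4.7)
The plan is to start with the linked, tight, and componental rooted \td\ $(T, \cV)$ of $G$ into finite parts provided by \cref{thm:LinkedTightCompTreeDecomp}, and then post-process it so as to additionally enforce property~\cref{itm:MainTechnical:IncDisj}. The conceptual picture is as follows: the vertices that are allowed to remain in a larger adhesion set $V_{e'}$ once they have appeared in some smaller (or equal-sized) lower adhesion $V_e$ must have a \emph{structural reason} to do so -- either dominating an end that lives above $e'$, or anchoring a critical vertex set included in $G \up e'$. The goal is to show that every other appearance of such a vertex $v \in V_e \cap V_{e'}$ can be eliminated by local surgery on $(T, \cV)$.

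The key auxiliary statement I would aim for is: if $v \in V_{e'}$ neither dominates an end of $G$ that lives in $G \up e'$ nor lies in any critical vertex set of $G$ included in $G \up e'$, then the number of components of $G \up e' - (V_{e'} \setminus \{v\})$ that contain a neighbour of $v$ is finite, and moreover their union is finite. Indeed, if infinitely many such components existed, a Star-Comb Lemma argument (in the spirit of those used in \cref{sec:NoLeanTD}) would yield either a comb whose spine is a ray of an end of $G$ that is dominated by $v$ and lives in $G \up e'$, or a family of components clustering around a finite subset $X \subseteq V_{e'}$ with $v \in X \subseteq V(G \up e')$ that witnesses $X$ being a critical vertex set included in $G \up e'$; both outcomes contradict the assumption on $v$.

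Given this finiteness, I would perform the following surgery edge-by-edge: for each unjustified $v \in V_{e'}$, absorb the finitely many components of $G \up e' - (V_{e'} \setminus \{v\})$ reachable from $v$ into the bags \emph{below} $e'$ (possibly by inserting a new node just below $e'$ carrying the union of these components together with $V_{e'}$), which detaches $v$ from all adhesions $V_f$ with $e' \le_T f$. Iterating this in a well-founded order -- for instance, lexicographically by the pair $(|V_{e'}|, \mathrm{depth}(e'))$, handling larger adhesion sizes first and within each size, nodes closer to the root first -- ensures that fixing a violation at $(e, e')$ cannot resurrect one at any earlier stage. Throughout, tightness is preserved because we only add vertices to bags whose induced parts remain connected (componentalness is also preserved since we absorb whole components), and the number of "unjustified pairs" strictly decreases in each surgery step.

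The main obstacle I foresee is the \emph{preservation of linkedness} across these modifications: after detaching $v$ from $V_{e'}$ and possibly enlarging some lower bag, the Menger-type inequality $\min\{|V_f| : f \in E(sTt)\}$ disjoint $V_s$--$V_t$ paths must still be certified between all comparable pairs of nodes. The strategy here is to exploit that any family of disjoint paths in the original $(T, \cV)$ that used $v$ as an intermediate vertex above $e'$ now lies inside the finite "reach" of $v$ that we just absorbed below, so that such paths can be rerouted (or simply shortened) within the new lower bag without creating any deficit. A careful bookkeeping then shows that the final \td\ simultaneously enjoys linkedness, tightness, componentalness, and \cref{itm:MainTechnical:IncDisj}, completing the proof.
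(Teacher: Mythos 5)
First, a remark on scope: this paper does not actually prove \cref{thm:LinkedTightCompTreeDecompIncDisj} --- it is quoted from the companion paper \cite{LinkedTDInfGraphs}*{Theorem~1'}, where property \cref{itm:MainTechnical:IncDisj} is obtained not by post-processing but by building it into the recursive construction of the \td\ itself (via the choice of the separators). So there is no in-paper proof to compare against, and your proposal must stand on its own. It does not, for the following reasons.

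The central auxiliary claim is only half correct. That only \emph{finitely many} components of $G\up e' - (V_{e'}\setminus\{v\})$ meet $N(v)$ is fine: since $V_{e'}$ is finite, infinitely many such components would, by pigeonhole, share the same neighbourhood $Y\ni v$ in $G$, exhibiting a critical vertex set containing $v$ and included in $G\up e'$. But the assertion that \emph{their union is finite} is false, and it is exactly what your surgery needs. A single such component $C$ can be infinite while $v$ has only one neighbour in it and dominates no end living there (e.g.\ $v$ attached by a single edge to a ray in $G\strictup e'$); the Star--Comb Lemma applied to $N(v)\cap C$ gives you nothing when $N(v)\cap C$ is finite, and an infinite $C$ cannot be ``absorbed'' into a bag below $e'$ without destroying the finite-parts property. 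This is not a repairable technicality: the whole point of \cref{itm:MainTechnical:IncDisj} is that $v$ may legitimately persist in $V_{e'}$ for reasons other than domination or criticality when $|V_e|>|V_{e'}|$, and your surgery has no mechanism distinguishing that case. Two further gaps: (a) the termination argument is not well-founded --- there may be infinitely many violating pairs $(e,e')$ of each adhesion size and depth, so ``the number of unjustified pairs strictly decreases'' does not yield a terminating (or even stabilising transfinite) process without an additional compactness argument; and (b) the preservation of linkedness, which you yourself flag as the main obstacle, is only gestured at --- enlarging a bag below $e'$ changes the adhesion sets on the path to the root and can create new deficits between pairs of nodes entirely below $e'$, which your rerouting sketch (confined to the ``reach'' of $v$ above $e'$) does not address.
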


Halin \cite{halin1975chainlike}*{Theorem~2} showed that every locally finite, connected graph has a linked ray-decomposition\footnote{A \defn{ray-decomposition} is a \td\ whose decomposition tree is a ray.} into finite parts with disjoint adhesion sets. He used this result in \cite{halin1977systeme}*{Satz~10} to establish \cref{thm:LinkedTightCompTreeDecompIncDisj} for locally finite graphs with at most two ends, replacing \cref{itm:MainTechnical:IncDisj} by the stronger condition of having disjoint adhesion sets.
In light of this, we discuss here that \cref{itm:MainTechnical:IncDisj} describes how close one may come to having `disjoint adhesion sets' in the general case.

If~$G$ is not locally finite, we generally cannot require the \td\ $(T, \cV)$ in \cref{thm:LinkedTightCompTreeDecomp} to have disjoint adhesion sets while having finite parts, as every dominating vertex of an end $\eps$ will be eventually contained in all adhesion sets along the ray of $T$ which arises from $\eps$.
Moreover, as every critical vertex set has to lie in an adhesion set of any \td\ into finite parts, and since the \td\ is linked, one can easily check that the adhesion sets also intersect in critical vertex sets.
Thus one might hope to obtain a \td\ as in \cref{thm:LinkedTightCompTreeDecomp} that satisfies the following condition:
\begin{enumerate}[label=\rm{(\arabic*')}]
    \item \label{itm:MainTechnical:IncDisjStronger} for every $e <_T e' \in E(T)$ each vertex of $V_e \cap V_{e'}$ either dominates some end of~$G$ that lives in~$G \up e'$, or is contained in a critical vertex set of $G$ that is included in $G\up e'$. 
\end{enumerate}

But \cref{itm:MainTechnical:IncDisj} allows for more than \cref{itm:MainTechnical:IncDisjStronger}: If $e <_T e' \in T$ and $|V_e| > |V_{e'}|$, then $V_e$ and $V_{e'}$ are allowed to intersect also in vertices that do not dominate an end and that are not contained in a critical vertex set. The following example shows that allowing this is in fact necessary. 
It presents a locally finite graph that does not admit a \td\ $(T,\cV)$ as in \cref{thm:LinkedTightCompTreeDecomp} with \cref{itm:MainTechnical:IncDisjStronger}, the stronger version of \cref{itm:MainTechnical:IncDisj}.
As locally finite graphs do not have any dominating vertices and critical vertex sets, \cref{itm:MainTechnical:IncDisjStronger} boils down to the property that the \td\ $(T, \cV)$ has \defn{upwards disjoint adhesion sets}, that is $V_e \cap V_{e'} = \emptyset$ for every $e <_T e' \in E(T)$.

\begin{example} \label{ex:NoTDWithUpwardsDisjointAdhesionSets}
    There is a locally finite connected graph $G$ which does not admit a linked, tight, componental rooted \td\ $(T, \cV)$ into finite parts with upwards disjoint adhesion sets, i.e.\ one which satisfies \cref{itm:MainTechnical:IncDisjStronger}.
\end{example}

\begin{proof}
Let $c_{00}(\N)$ denote the set of all sequences with values in $\N$ that are eventually zero.
    Let $c' \subseteq c_{00}(\N)$ be the set of all the sequences $\cS = (s_n)_{n \in \N}$ for which there exists $N \in \N$ such that $s_i \geq 1$ for all $i < N$ and $s_i = 0$ for all $i \geq N$.\footnote{We restrict to the set $c'$ instead of $c_{00}(\N)$ to ensure that $G$ is connected.}
    Let $G$ be the graph depicted in \cref{fig:CounterexUpDisj1}, that is the graph on the vertex set 
    \[
    V(G) := \{((s_n)_{n \in \N}, i, j) \in c' \times \N \times \{1,2,3\}\}
    \]
    and with edges between $(\cS, i, j)$ and $(\cS, i', j')$ whenever $|i-i'| + |j-j'| = 1$ and, for $\cS = (s_0, \dots, s_{n-1}, 0, \dots)$ and $\cS' = (s'_0, \dots, s'_{n-1}, s'_{n}, 0, \dots)$, with edges between $(\cS, i-1, 3)$ and $(\cS', 0, j')$ and between $(\cS, i, 3)$ and $(\cS', 0, j')$ whenever $s'_{n} = i \geq 1$ and $s_k = s'_k$ for all $k < n$.  Intuitively, a sequence such as $\cS = (3,2,3,0,0,\ldots)$ encodes  the specific $\N \times 3$ grid that one encounters when following along the (blue) $(0,0,0,\ldots)$ grid, then turning after 3 steps into the (green) $(3,0,0,\ldots)$ grid, then turning after another 2 steps into the (black) $(3,2,0,0,\ldots)$ grid, and finally turning after another 3 steps into the $(3,2,3,0,0,\ldots)$ grid (not depicted).
    Note that $G$ is locally finite and connected.

    \begin{figure}
        \centering
        \pdfOrNot{\includegraphics[page=2]{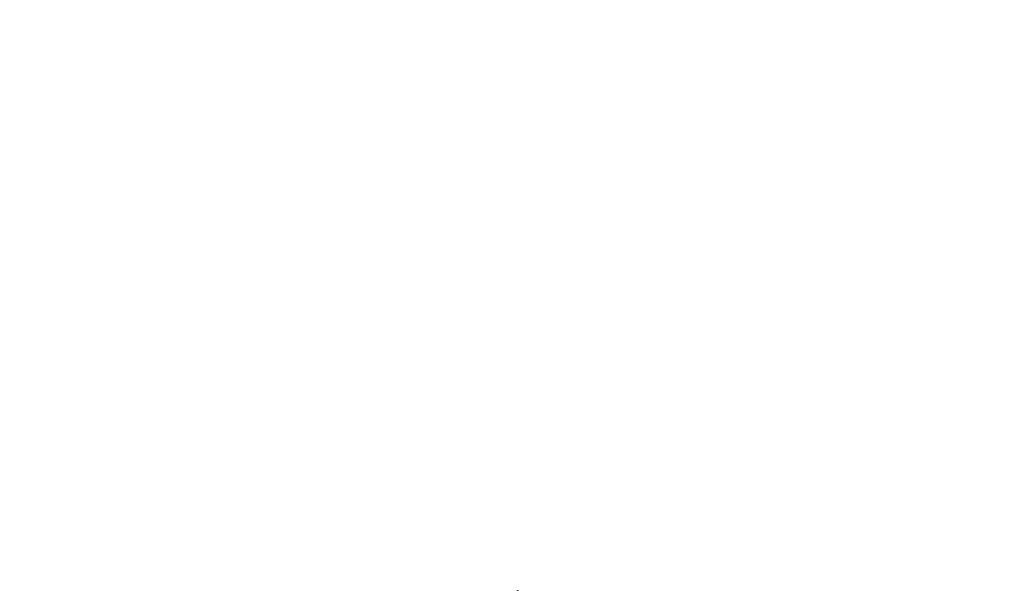}}{\includesvg[width=1.09\columnwidth]{svg/upwardsdisjointexample_coloured.svg}}
        \caption{Depicted is the graph $G$ from \cref{ex:NoTDWithUpwardsDisjointAdhesionSets}. The blue subgraph is induced by the vertices $(\frakS, i, j)$ of $G$ with $\frakS = (0, 0, 0, \dots)$. The green subgraphs are induced by the vertices $(\frakS_x,i,j)$ of $G$ with $\frakS_x = (x,0,\dots)$ for $x \in \N_{\geq 1}$, respectively.}
        \label{fig:CounterexUpDisj1}
    \end{figure}

    By \cref{lem:LinkedTightCompImpliesDisplayingEndStructure} every linked, tight, componental rooted \td\ of $G$ into finite parts displays all the ends of $G$ and their (combined) degrees.
    Thus, it suffices to show that~$G$ has no rooted \td\ with upwards disjoint adhesion sets which displays all its ends and their (combined) degrees.
    Let $(T, \cV)$ be a \td\ of~$G$ which displays all ends of~$G$ and their (combined) degrees. 
    Consider the rays $R_{\cS, j} = \{(\cS, i, j) \mid i \in \N\}$ for all $\cS \in c'$ with $j \in \{1,2,3\}$. 
    Then for every fixed $\cS \in c'$ the rays $R_{\cS, 1}, R_{\cS,2}, R_{\cS,3}$ all belong to the same end $\eps_\cS$ of $G$; and these ends $\eps_\cS$ are pairwise distinct and have (combined) degree $3$.
    Since $(T, \cV)$ displays all ends of~$G$ and their (combined) degrees, there exist for each $\eps_\cS$ infinitely many edges $e$ of $T$ such that $V_e$ has size three and meets every ray $R_{\cS,j}$; we fix for each end $\eps_\cS$ one such edge $e_\cS$. Let $v_\cS = (\cS, i_\cS, 3)$ be the (unique) vertex in $V_{e_\cS} \cap V(R_{\cS,3})$ (see \cref{fig:CounterexUpDisj2}). 

    Set $\cS_0 := (0, 0, \dots)$ and $\cS_n := (s_0, \dots, s_{n-1}, i_{\cS_{n-1}}+1, 0, \dots)$ where $\cS_{n-1} = (s_0, \dots, s_{n-1}, 0, \dots)$.
    Then the $v_{\cS_n}$ define a (unique) end~$\eps$ of $G$, in that every ray that meets all the $v_{\cS_n}$ belongs to the same end~$\eps$. This end has degree $2$ as witnessed by the sets $S_n := \{(\cS_n, i_{\cS_n}, 3), (\cS_n, i_{\cS_n} + 1, 3)\}$ (see \cref{fig:CounterexUpDisj2}). 

    \begin{figure}
        \centering
        \pdfOrNot{\includegraphics[page=2]{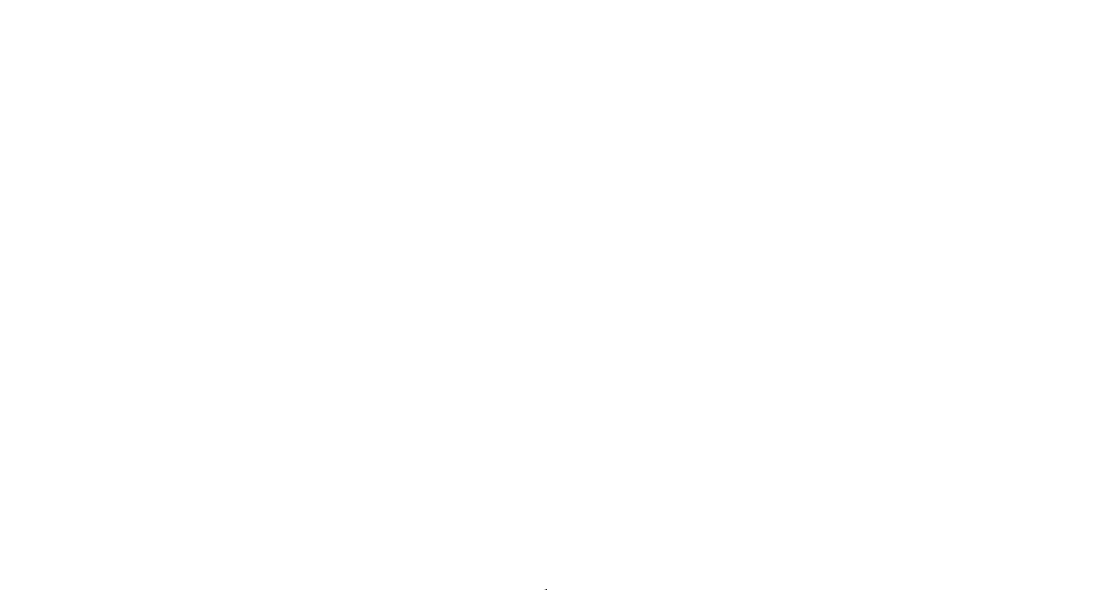}}{\includesvg[width=1.15\columnwidth]{svg/upwardsdisjointexample_ray.svg}}
        \caption{A ray in $G$ that meets all the sets $V_{e_{\cS_n}}$. The end to which it belongs has degree 2.}
        \label{fig:CounterexUpDisj2}
    \end{figure}
    
    Now since $(T, \cV)$ displays the (combined) degree of $\eps$, and because the sets $S_n$ are the only separators witnessing that $\eps$ has degree $2$, there is an edge $e \in T$ with $V_e = S_n$ for some $n \in \N$. But since every such~$S_n$ meets the set $V_{e_{\cS_n}}$, the \td\ $(T, \cV)$ does not have upwards disjoint separators.
\end{proof}

This example might explain why Halin never extended his result \cite{halin1977systeme}*{Satz~10} mentioned above to graphs with more than two ends: His precursor notion to a tree-decomposition, namely the quasi-trees and pseudo-trees discussed in \cite{halin1991tree}, required upwards disjoint separators, and hence could not possibly capture the types of locally finite graphs described in \cref{ex:NoTDWithUpwardsDisjointAdhesionSets}.

\section*{Acknowledgements}

The second named author gratefully acknowledges support by doctoral scholarships of the Studienstiftung des deutschen Volkes and the Cusanuswerk -- Bisch\"{o}fliche Studienf\"{o}rderung.
The third named author gratefully acknowledges support by a doctoral scholarship of the Studienstiftung des deutschen Volkes.

\bibliographystyle{amsplain}
\arXivOrNot{\bibliography{collectivearXiv.bib}}{\bibliography{collective.bib}}

\end{document}